\documentclass[11pt,leqno]{amsproc}
\usepackage[a4paper,top=3cm,bottom=3cm,left=3cm,right=3cm]{geometry}
\usepackage{amssymb,amsmath}
\usepackage{exscale}
\usepackage{tikz}

\usepackage{color} 

\RequirePackage[cp1251]{inputenc}
\RequirePackage{srcltx}

\numberwithin{equation}{section}
\sloppy

\renewcommand\r{\rangle}
\renewcommand\l{\langle}

\newcommand\dist{\operatorname{dist}}

\newcommand\cal{\mathcal}

\newcommand\R{\mathbb{R}}
\newcommand \C{\mathbb{C}}
\newcommand\Q{\mathbb{Q}}
\newcommand\Z{\mathbb{Z}}
\newcommand\N{\mathbb{N}}

 \newcommand\norm[1]{\left\lVert#1\right\rVert}

\newcommand\B{\mathcal{B}}
\newcommand{\E} {{\mathcal E}}

\newtheorem{Thm}{Theorem}[section]
\newtheorem{Lemma}[Thm]{Lemma}

\theoremstyle{remark}
\newtheorem{Rem}[Thm]{Remark}

  \sloppy

 \begin{document}

     \title{Examples of  exponential bases  on unions of intervals}
\author{Oleg Asipchuk}
\address{Oleg Asipchuk,  Florida International University,
	Department of Mathematics and Statistics,
	Miami, FL 33199, USA}
\email{aasip001@fiu.edu}
\author{Vladyslav Drezels}
\address{ Vladyslav Drezels, Florida International University,
	Department of Mathematics and Statistics,
	Miami, FL 33199, USA}
\email{vdrez001@fiu.edu}
\subjclass[2020]{Primary: 42C15  
	Secondary classification: 42C30.  }
\begin{abstract}
 In this paper, we construct explicit exponential bases  of unions of segments of total measure one. Our construction  applies to finite or infinite unions of segments,  with some conditions on the gaps between them. 
   	We also  construct exponential bases on finite or infinite unions of cubes in $\R^d$ and prove a stability result for unions of segments that generalize    Kadec's $\frac 14-$theorem.
\end{abstract}
\maketitle
\section{Introduction}
The main purpose of this paper is to construct explicit exponential bases on finite or infinite unions of intervals of the real line. We assume that our intervals have total measure  one, until otherwise specified. 

We recall that an exponential basis on a domain $D\subset \R^d$ is an unconditional  Sch\"auder basis for $L^2(D)$ in the form of  $\{e^{2\pi i \lambda_n \cdot x}\}_{n\in\Z^d  }$ with  $\lambda_n \in\R^d$.   An important example of exponential basis is the 
Fourier basis $ \E=\{e^{2\pi i n   x}\}_{n\in\Z   }$  for $L^2(0,1)$.

Non-orthonormal exponential bases on  intervals of the real line  are well studied and well understood  in the context of non-harmonic Fourier
series, see  \cite{L, NL, P, S} just to cite a few.  Proving the existence of an exponential basis is in general a difficult problem and   constructing explicit bases can be even more difficult.

 In \cite{NL} the Authors proved the existence of bases on  finite unions of intervals under some conditions on the lengths of the intervals. It is proved in \cite{KN} that  exponential bases on any finite union of intervals exist, but the construction of such bases is not explicit. It is not clear whether exponential bases  on arbitrary infinite unions of intervals  exist or not.

In \cite{D}   the Author proves necessary and sufficient conditions  for which  sets  in the form of   $\{e^{2\pi i (n+\delta_{ j})x}\}_{n\in\Z,{j\leq N}}$  are exponential bases on unions of $N$  intervals of a unit length separated by integer gaps and gave an explicit expression for the frame constants of these bases.  Such a result  can be used to construct explicit exponential bases on intervals with rational endpoints. However, the conditions involve evaluating the eigenvalues of $N\times N$ matrices,  which can  be a difficult task. 
Some of the results in \cite{D} appear also in other papers, for example in \cite{KMN}. 

In \cite{La}  it is proved that  if $D$ is the union of two disjoint intervals  of total length 1, then $D$  has an orthonormal basis of exponentials if and only if it tiles $\R$ by translations. Recall that a measurable set  $D $  tiles  $\R^d$   by translation if we can fill the space with translated copies of  $D$ without overlaps. Results for  unions of three intervals that tile the real line are in \cite{DKKM}.

In many applications, such as aircraft instrument communications, air
traffic control simulation, or telemetry \cite{F}, one can consider the possibility
of obtaining sampling expansion which involved sample values of a function
and its derivatives.  That translated into finding bases  of the form of $\{x^k e^{2\pi i x\lambda_n}\}_{n\in\Z}$, with $k \in\N\cup\{0\}$. See \cite{GS}.

It is worth mentioning the recent \cite{PRW}, where the Authors partition the interval $[0,1]$ into intervals $I_1$, ... $I_n$ and the set $\Z$ into $\Lambda_1$, ... $\Lambda_n$ such that the complex exponential functions with frequencies in $\Lambda_k$ form a Riesz basis for $L^2(I_k)$.

The existence of orthonormal bases on a domain of $\R^d$ is a difficult  problem related to the tiling properties of the domain.  It has been recently proved in \cite{LM} that convex sets tile $\R^d$ by translations  if and only if  they have  an exponential basis. In \cite{DMT} it is proved that the set $\E =\{e^{2\pi i n\cdot x}\}_{n\in\Z^d}$ is an exponential basis on a domain $D\subset \R^d$ of measure $1$ if and only if $D$ tiles $\R^d$. Furthermore, $\E$ is orthonormal for $L^2(D)$.

The aforementioned  results   in \cite {DMT} are related   to Theorem 1 in \cite{IK}, where it is proved that if a set $\{e^{2\pi i x \lambda}\}_{\lambda\in\Lambda}$ is an orthonormal basis on a domain  $D\subset \R$, then  $\Lambda$ is periodic, i.e.,  $\Lambda= T+\Lambda$ for some $T\in\N$.

\subsection{Our results}
Before we introduce our results we need some more notations. By $m\in\N\cup\{\infty\}$ we mean  that $m$ is either a natural number or it is  infinity. 	We let   $I=[0,1)$.  Given  $0=a_0<a_1<...<a_{m-1}<...<1$ be a sequence for $m\in\N\cup\{\infty\}$, we let $I_j=[a_j, a_{j+1})$, so that $I=  \bigcup I_j.$  
	
	Given $0=b_0<b_1<...<b_{m-1}<...$, we let 
\begin{equation}\label{defJ} J=\bigcup_j J_j, \quad\mbox{with }\ 
 J_j=[a_{j}+b_j,a_{j+1}+b_j).\end{equation}  
We can also write \begin{equation}\label{defJ2} J=\bigcup_j [c_j-\gamma_j, c_j+\gamma_j) \quad\mbox{with }\ 
 c_j=\frac{a_{j-1}+a_j}2+ b_j, \quad \gamma_j= \frac{a_j-a_{j-1}} 2.\end{equation}
   Note that $\sum_j \gamma_j=1$. So, on graph we have

   	  \begin{figure}[h!]
  \begin{center}
     \begin{tikzpicture}
      \draw [black] (-3.14,0) --(-2,0);
      \draw [black] (-3.14,-0.1) --(-3.14,0.1);
      \draw [black] (-2,-0.1) --(-2,0.1);
      \draw [black] (-0.75,0) --(1.75,0);
      \draw [black] (-0.75,-0.1) --(-0.75,0.1);
      \draw [black] (1.75,-0.1) --(1.75,0.1);
      \draw [black] (4.5,0) --(8,0);
    \draw [black] (4.5,-0.1) --(4.5,0.1);
      \draw [black] (8,-0.1) --(8,0.1);
        \draw    (-3.14,0.5) node [black]{{\small $0$}};
        \draw    (-2,0.5) node [black]{{\small $a_1$}};
        \draw    (-2.5,-0.5) node [black]{{\small $J_0$}};
        \draw    (-0.75,0.5) node [black]{{\small $a_1+b_1$}};
        \draw    (1.75,0.5) node [black]{{\small $a_2+b_1$}};  
        \draw    (0.5,-0.5) node [black]{{\small $J_1$}};
        \draw    (3,0) node [black]{{\small ...}};
        \draw    (4.5,0.5) node [black]{{\small $a_{m-1}+b_{m-1}$}};
        \draw    (8.3,0.5) node [black]{{\small $a_m+b_{m-1}$}};  
         \draw    (6,-0.5) node [black]{{\small $J_{m-1}$}};
                  \draw    (-4,0) node [black]{{\small $J:$}};
                  \draw    (9,0) node [black]{{\small ...}};
      \end{tikzpicture}
   \end{center}
 \end{figure}

	  \begin{figure}[h!]
  \begin{center}
     \begin{tikzpicture}
      \draw [black] (-3.14,0) --(2,0);
      \draw [black] (-3.14,-0.1) --(-3.14,0.1);
      \draw [black] (-2,-0.1) --(-2,0.1);
      \draw [black] (0.5,-0.1) --(0.5,0.1);
      \draw [black] (4,0) --(8,0);
    \draw [black] (4.5,-0.1) --(4.5,0.1);
      \draw [black] (8,-0.1) --(8,0.1);
        \draw    (-3.14,0.5) node [black]{{\small $0$}};
        \draw    (-2.5,-0.5) node [black]{{\small $I_0$}};
        \draw    (-2,0.5) node [black]{{\small $a_1$}};
        \draw    (0.5,0.5) node [black]{{\small $a_2$}};  
        \draw    (-0.5,-0.5) node [black]{{\small $I_1$}};
        \draw    (3,0) node [black]{{\small ...}};
        \draw    (4.5,0.5) node [black]{{\small $a_{m-1}$}};
        \draw    (8,0.5) node [black]{{\small $a_m$}};  
         \draw    (6,-0.5) node [black]{{\small $I_{m-1}$}};
                  \draw    (-4,0) node [black]{{\small $I:$}};
                  \draw    (9,0) node [black]{{\small ...}};
      \end{tikzpicture}
   \end{center}
 \end{figure}

Let $\{x\}$ be the  decimal part of a real number $x,$ let $\lfloor x\rfloor$ (the {\it floor function}) be the largest integer  that is $\leq x$, and let $\lceil x\rceil$ (the {\it ceiling function}) be the smallest integer  that is $\geq x$.   If $E\subset \R$,  we define the distance function for $x\in\R$ and $E\subset \R$ as $\dist(E,x):=\min_{y\in E}\{ |y-x|\}$ or  $\dist(E,F):=\min_{y\in E, x\in F}\{ |y-x|\}.$ 

Next, we introduce the sequence $\delta_n^*.$ Let $\beta>0$ be fixed;  for every $n\in \Z$, we let
	\begin{equation} \label{d*}
		\delta_n^*=\begin{cases}
	\frac{\lfloor \beta n \rfloor+1}{\beta}-n  & \text{if } \{\beta n\} \geq \frac12 \\
	\frac{\lfloor \beta n \rfloor}{\beta}-n  & \text{if } \{\beta n\} < \frac12\\
\end{cases}.
	\end{equation}
We can see at once that 
$$\delta_n^*= \frac{\xi_n}{\beta} \min\left\{\{\beta n\},1-\{\beta n\}\right\}=\frac{\xi_n}{\beta}\dist{(\Z,\beta n)},
$$
where
\begin{equation*}
    \xi_n = \begin{cases}
  1  & \text{if } \{\beta n\} \geq \frac12 \\
  -1  & \text{if } \{\beta n\} < \frac12 \\
\end{cases}
\end{equation*}
Let \begin{equation}\label{def-*}
	 \B^* =\{e^{2\pi i x(n+\delta_n^*)}\}_{n\in\Z}, \end{equation}   where the $\delta_n^*$ are defined as in \eqref{d*}. Our main results are the following.

 \begin{Thm}\label{l-segments}  Let $J$ be as in \eqref{defJ}. If the are exists $\beta\geq 1$ such that  $\frac{b_k}{\beta}\in\Z $ for all $k$, then the set $\B^*$ defined in \eqref{def-*} is an exponential basis for $L^2(J).$ 

\end{Thm}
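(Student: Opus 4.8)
The plan is to reduce, by a measure–preserving change of variables, to the case $J=I=[0,1)$, and then to recognize $\B^*$ on $L^2(0,1)$ as a Kadec–type perturbation of the Fourier basis to which a ``$\tfrac14$ in the mean'' argument applies.

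First I would record the algebraic fact that makes the hypothesis usable: by construction \eqref{d*}, the number $\mu_n:=\beta(n+\delta_n^*)$ is the integer nearest to $\beta n$ (ties rounded up), so $\mu_n\in\Z$ and $(\mu_n)_{n\in\Z}$ is strictly increasing with $\mu_{n+1}-\mu_n\ge 1$ (consecutive values of $\beta n$ differ by $\beta\ge 1$, and rounding is nondecreasing). Since $b_k/\beta\in\Z$, this gives $(n+\delta_n^*)\,b_k=\mu_n\,(b_k/\beta)\in\Z$ for all $n,k$, i.e. $e^{-2\pi i(n+\delta_n^*)b_k}=1$. Next I would introduce the ``unshift'' operator $U\colon L^2(J)\to L^2(I)$, $(Uf)(x)=f(x+b_j)$ for $x\in I_j$; since the $I_j$ tile $I$, the $J_j$ tile $J$, and $U$ merely translates $I_j$ onto $J_j$, the map $U$ is unitary. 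Splitting $\int_J f(x)\,e^{-2\pi i(n+\delta_n^*)x}\,dx$ over the pieces $J_j$, substituting $x=y+b_j$, and using $e^{-2\pi i(n+\delta_n^*)b_j}=1$, one obtains $\langle f,e^{2\pi i(n+\delta_n^*)\cdot}\rangle_{L^2(J)}=\langle Uf,e^{2\pi i(n+\delta_n^*)\cdot}\rangle_{L^2(I)}$ for every $n$ and every $f\in L^2(J)$ (all integrals converge absolutely, $|J|=|I|=1$). Hence the analysis operator of $\B^*$ on $L^2(J)$ equals that of $\B^*$ on $L^2(I)$ composed with the unitary $U$, so $\B^*$ is an exponential basis for $L^2(J)$ if and only if it is one for $L^2(0,1)$.

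It then remains to prove that $\B^*=\{e^{2\pi i(n+\delta_n^*)x}\}_{n\in\Z}$ is a Riesz basis for $L^2(0,1)$. Here $|(n+\delta_n^*)-n|=|\delta_n^*|=\tfrac1\beta\dist(\Z,\beta n)\le\tfrac1{2\beta}$, and the sequence $n+\delta_n^*=\mu_n/\beta$ is uniformly separated with gap $\ge 1/\beta$ because $(\mu_n)$ is strictly increasing in $\Z$. If $\beta>2$ the deviation is $<\tfrac14$ and Kadec's $\tfrac14$–theorem applies directly. For $1\le\beta\le 2$ I would invoke Avdonin's ``$\tfrac14$ in the mean'' theorem: writing $\delta_n^*=\tfrac1\beta\,\varphi(\{\beta n\})$ with $\varphi(t)=-t$ on $[0,\tfrac12)$ and $\varphi(t)=1-t$ on $[\tfrac12,1)$, a direct computation gives $\int_0^1\varphi=0$; consequently the block averages $\tfrac1d\sum_{n=a+1}^{a+d}\delta_n^*$ tend to $0$ as $d\to\infty$ uniformly in $a\in\Z$ when $\beta$ is irrational (equidistribution of $(\{\beta n\})$, whose discrepancy is translation–invariant), and for $\beta=p/q$ in lowest terms they are periodic in $a$ with value over one period equal to $0$ (if $q$ is odd) or $\tfrac1{2p}\le\tfrac16$ (if $q$ is even, since then $p>q\ge2$); in all cases the relevant mean is $<\tfrac14$. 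Since $(n+\delta_n^*)$ is separated, has bounded deviation from $\Z$, and satisfies the mean condition, Avdonin's theorem yields that $\B^*$ is a Riesz basis for $L^2(0,1)$, which together with the previous step proves the theorem.

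I expect the reduction step to be short, and the genuine obstacle to lie in the $L^2(0,1)$ step: because $\delta_n^*$ can be as large as $\tfrac1{2\beta}$, for $\beta$ near $1$ it exceeds the Kadec radius $\tfrac14$, so a pure perturbation bound is unavailable and one must exploit the cancellation $\int_0^1\varphi=0$ that is built into the definition \eqref{d*} of $\delta_n^*$. (For rational $\beta$ one could instead bypass Avdonin altogether: rescaling the interval turns $\B^*$ on $L^2(0,1)$ into a family of the form $\{e^{2\pi i(n+\delta_j)x}\}_{n\in\Z,\,j\le N}$ on a union of $N$ unit intervals separated by integer gaps, to which the criterion of \cite{D} applies; this route, however, does not reach irrational $\beta$.)
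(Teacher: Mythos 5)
Your proposal is correct and takes essentially the same route as the paper: the hypothesis $\frac{b_k}{\beta}\in\Z$ forces $e^{-2\pi i (n+\delta_n^*)b_k}=1$, so the translation map between $J$ and $[0,1)$ (your unitary $U$ is exactly the operator of Lemma \ref{l-Transform}, which the paper packages via Lemma \ref{lmBasis} and a Paley--Wiener step with perturbation identically zero) reduces the theorem to showing that $\B^*$ is a Riesz basis for $L^2(0,1)$, which you and the paper (Lemma \ref{lmDelta*}) both do by Kadec for $\beta>2$ and by Avdonin for $1\le\beta\le 2$, using periodicity of $\delta_n^*$ for rational $\beta$ and Weyl equidistribution for irrational $\beta$. (Incidentally, your parity bookkeeping in the rational case --- period sum $0$ when $q$ is odd and $q/(2p)$ when $q$ is even, because $\{\beta n\}=\tfrac12$ occurs exactly at $n\equiv q/2$ --- is the accurate version; the paper's Lemma \ref{lmDelta*} states the two cases the other way around, though the required bound $<q/4$ holds in either case.)
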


\begin{Thm}\label{T-ComplementB}
Let  $J$ and $\beta$ be as in Theorem \ref{l-segments}. Let $m\in\N$ and  
let $\Lambda = \left\{n+\delta^*_n\right\},$ where $\delta^*$ is as in $\eqref{d*}.$
If  $\frac{b_k}{\beta}\in\Z $ for all $k=1,...,m-1$, then the set $\E(\frac{1}{\Delta}\Z\setminus\Lambda)$ is an exponential basis for $L^2([0,\Delta]\setminus J),$ where 
$$
\Delta=\left\lceil\frac{1+b_{m-1}}{\beta} \right\rceil \beta.
$$
\end{Thm}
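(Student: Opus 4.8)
The plan is to deduce Theorem~\ref{T-ComplementB} from Theorem~\ref{l-segments} via a general duality principle for Riesz bases attached to an orthogonal splitting. (Recall that for an exponential system on a set of finite positive measure, being an unconditional Schauder basis is equivalent to being a Riesz basis, since all the functions $e^{2\pi i\lambda x}$ have the same $L^2$-norm on a fixed set; so it is enough to produce a Riesz basis.) The principle is: if $\{u_k\}_{k\in K}$ is an orthonormal basis of a Hilbert space $H=V\oplus V^\perp$ with orthogonal projections $P$ onto $V$ and $Q=I-P$, and $K=A\sqcup B$, then $\{Pu_k\}_{k\in A}$ is a Riesz basis of $V$ if and only if $\{Qu_k\}_{k\in B}$ is a Riesz basis of $V^\perp$. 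I would apply this with $H=L^2([0,\Delta])$, with $\{u_k\}$ the normalized Fourier basis $\{\Delta^{-1/2}e^{2\pi ikx/\Delta}\}_{k\in\Z}$ indexed by $K=\frac1\Delta\Z$, with $V$ the subspace of functions vanishing a.e.\ off $J$ (so $V^\perp=L^2([0,\Delta]\setminus J)$ and $P,Q$ are multiplication by $\mathbf{1}_J$ and $\mathbf{1}_{[0,\Delta]\setminus J}$), with $A=\Lambda$ and $B=\frac1\Delta\Z\setminus\Lambda$. Under the isometry $V\cong L^2(J)$ the family $\{Pu_\lambda\}_{\lambda\in\Lambda}$ is a nonzero constant multiple of $\B^*=\E(\Lambda)$, which is a Riesz basis of $L^2(J)$ by Theorem~\ref{l-segments} (the data $b_0=0$ and $b_k/\beta\in\Z$ for $k=1,\dots,m-1$ are exactly the hypothesis of that theorem for this finite union); the duality then gives that $\E(\frac1\Delta\Z\setminus\Lambda)$ is a Riesz basis, hence an exponential basis, of $L^2([0,\Delta]\setminus J)$.

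Before applying the principle I must check that the data are compatible. Since $m$ is finite and $a_m=1$, the set $J$ lies in $[0,1+b_{m-1})$. Now $\Delta=\lceil(1+b_{m-1})/\beta\rceil\beta$ is an integer multiple of $\beta$ with $\Delta\ge 1+b_{m-1}$, so $J\subset[0,\Delta]$; and by \eqref{d*} every point $n+\delta_n^*$ lies in $\frac1\beta\Z$, while $\frac1\beta\Z\subset\frac1\Delta\Z$ because $\Delta/\beta\in\Z$. Hence $\Lambda\subset\frac1\Delta\Z$ and $\frac1\Delta\Z=\Lambda\sqcup(\frac1\Delta\Z\setminus\Lambda)$ is a genuine partition of $K$ (the map $n\mapsto n+\delta_n^*$ is injective because $\B^*$ is a basis). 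This is precisely the point of the chosen value of $\Delta$: the ceiling is what makes it a multiple of $\beta$, which forces $\Lambda\subset\frac1\Delta\Z$, and it is the least such multiple still containing $J$.

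It remains to establish the duality principle (or quote it from the literature). Identifying $H$ with $\ell^2(K)$ through $\{u_k\}$, one has $PQ=0$, hence $\langle Pu,Qv\rangle=0$ for all $u,v$, so $\|Pu+Qv\|^2=\|Pu\|^2+\|Qv\|^2$; consequently $\{Pu_k\}_{k\in A}$ is a Riesz basis of $V$ exactly when $P$ restricts to a (bounded, boundedly invertible) isomorphism of $\ell^2(A)$ onto $V$, and symmetrically for $Q$ and $B$. Assuming $P|_{\ell^2(A)}$ is such an isomorphism, I would check that $Q|_{\ell^2(B)}$ is too: it is injective, since $v\in V\cap\ell^2(B)$ and $v'\in\ell^2(A)$ with $Pv'=v$ give $\|v\|^2=\langle Pv',v\rangle=\langle v',Pv\rangle=\langle v',v\rangle=0$; it has dense range in $V^\perp$, since $z\in V^\perp$ orthogonal to $Q(\ell^2(B))$ satisfies $\langle z,v\rangle=\langle Qz,v\rangle=\langle z,Qv\rangle=0$ for all $v\in\ell^2(B)$, so $z\in V^\perp\cap\ell^2(A)=\ker(P|_{\ell^2(A)})=\{0\}$; and it is bounded below, for otherwise there would be unit vectors $v_n\in\ell^2(B)$ with $\|Qv_n\|\to0$, and picking $w_n\in\ell^2(A)$ with $Pw_n=Pv_n$ and $\sup_n\|w_n\|<\infty$ (by boundedness of $(P|_{\ell^2(A)})^{-1}$), the relations $\|w_n-v_n\|^2=\|w_n\|^2+1$ (orthogonality of $\ell^2(A)$ and $\ell^2(B)$) and $\|w_n-v_n\|=\|Q(w_n-v_n)\|\le\|w_n\|+\|Qv_n\|$ (as $w_n-v_n\in V^\perp$) would force $1\le 2(\sup_n\|w_n\|)\|Qv_n\|+\|Qv_n\|^2\to0$, a contradiction. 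The reverse implication is the same argument with $(A,V,P)$ and $(B,V^\perp,Q)$ swapped.

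The entire analytic weight of Theorem~\ref{T-ComplementB} sits in Theorem~\ref{l-segments}; what is added here is soft. The single quantitative step is the ``bounded below'' estimate in the duality principle, and the only thing requiring real attention is the arithmetic bookkeeping guaranteeing $J\subset[0,\Delta]$, $\Delta/\beta\in\Z$, and $\Lambda\subset\frac1\Delta\Z$. If a suitable form of the duality principle is already on record, the proof collapses to that bookkeeping together with the application of Theorem~\ref{l-segments}.
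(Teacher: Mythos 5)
Your proposal is correct and follows essentially the same route as the paper: verify that $J\subset[0,\Delta]$ and $\Lambda\subset\frac1\Delta\Z$, invoke Theorem~\ref{l-segments} to get that $\E(\Lambda)$ is a Riesz basis for $L^2(J)$, and then pass to the complement via the complementation principle (the paper simply quotes this as Theorem~\ref{T-Complement} from \cite{PRW}, whereas you supply a self-contained and correct proof of it). The only added value in your write-up is that proof of the duality principle, which is sound but not needed given the citation.
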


    We will prove in Lemma $\ref{lmDelta*}$  that $\B^*$ is also a basis in $L^2(0,1);$  our proof of Theorem $\ref{l-segments}$ shows that $\B^*$ has the same  frame constants in $L^2(J)$ and $L^2(0,1).$

	   In our Theorem, $\ref{l-segments}$ the gaps $ b_k$ are integer multiples of $\beta$ and so  the set  $J$ is unbounded when $m=\infty$.  To the best of our knowledge, there are very few examples of exponential bases on unbounded sets in the literature. 
	    The existence of exponential frames on unbounded sets  of finite measure has been recently proved in \cite{NOU}.
		\medskip
		
We also observe that in  Theorem $\ref{T-ComplementB} $ we can't consider $m=\infty,$ because  $J$ is unbounded, and in the proof, we need to consider a finite  
interval $[0,\Delta]$ that contains  $J$.

Our paper is organized as follows: In section 2 we recall some preliminaries and we prove some important lemmas. In Section 3 we prove our main results. In Section 4 we prove the result for unions of cubes in $\R^d$ and a stability result.   

\medskip
\noindent
{\it Acknowledgments}. Foremost, we would like to express our sincere gratitude to our mentor Professor Laura De Carli for her continuous support of our research. 

\medskip
The research of this project started during the Summer 2022 REU program "AMRPU @ FIU" that took place at the Department of Mathematics and Statistics, Florida International University, which was supported by the NSA grant H982302210016 and NSF (REU Site) grant DMS-2050971. We acknowledge the collaboration of Mark Leal and Izzy Morell in the initial stage of this project and their participation in the technical report (available at go.fiu.edu/amrpu).

\section{Preliminaries}
We have used the excellent textbooks \cite{Heil} and  \cite{Y} for the definitions and some of the results presented in this section.

Let  $H$   be a  separable Hilbert space with inner product $\langle\ ,\ \rangle $  and norm $||\ ||=\sqrt{\l \ , \    \r} $. We will mostly work with $L^2(D),$ where $D\subset\R^d.$ So, the norm will be $||f||_2^2=\int_D |f(x)|^2 dx.$ The characteristic function on $D$ we denote as $\chi_D.$   
  
A sequence of vectors ${\mathcal V}= \{v_j\}_{j\in\Z }$  in   $H$ is a
{\it Riesz basis} if 
there exist  constants $A, \ B>0$    such that, for any $w\in H$ and  for all finite sequences $ \{a_j\}_{j\in J}\subset\C $, the following inequalities hold. 
\begin{align}\label{e2-RS}
 	A  \sum_{j\in J}   |a_j|^2   \leq  &\Big\Vert \sum_{j\in J}  a_j  v_j \Big\Vert^2  \leq B \sum_{j\in J} |a_j|^2.
\\\label{e2-frame}
 A||w||^2\leq  &\sum_{j=1}^\infty |\l  w, v_j\r |^2\leq B ||w||^2.
 \end{align}
The constants $A$ and $B$ are called {\it frame constants} of the basis. The left inequality in \eqref{e2-RS}  implies that ${\cal V}$ is linearly independent,  and the left inequality in \eqref{e2-frame} implies that  ${\cal V}$ is complete. If condition \eqref{e2-RS} holds we call ${\mathcal V}$ a Riesz sequence. We call ${\mathcal V}$ a frame if the condition \eqref{e2-frame} holds. If the condition \eqref{e2-frame} holds and $A=B$ then we call  ${\mathcal V}$ a tight frame, If $A=B=1,$ then we have a Parseval frame.
The following lemma is well-known, but for the reader's convenience, we will prove it.
\begin{Lemma}\label{lmBconstant}
If a sequence of vectors ${\mathcal V}= \{v_j\}_{j\in\Z }$ is a frame with upper constant $B$, then the right inequality in $\eqref{e2-RS}$ holds, i.e. for all finite sequences $ \{a_j\}_{j\in J}\subset\C $ 
$$
\Big\Vert \sum_{j\in J}  a_j  v_j \Big\Vert^2  \leq B \sum_{j\in J} |a_j|^2
$$
\end{Lemma}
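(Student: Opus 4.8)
The plan is to deduce the right-hand inequality in \eqref{e2-RS} from the frame inequality \eqref{e2-frame} by a duality argument. Fix a finite index set $J$ and scalars $\{a_j\}_{j\in J}\subset\C$, and set $w=\sum_{j\in J}a_jv_j$. The key identity is that
$$
\norm{w}^2=\l w,w\r=\sum_{j\in J}\ov{a_j}\,\l v_j,w\r,
$$
which, by the Cauchy--Schwarz inequality in $\ell^2(J)$, is bounded by
$$
\Big(\sum_{j\in J}|a_j|^2\Big)^{1/2}\Big(\sum_{j\in J}|\l v_j,w\r|^2\Big)^{1/2}.
$$

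Next I would estimate the second factor. Since $J$ is finite, $\sum_{j\in J}|\l w,v_j\r|^2\leq\sum_{j=1}^\infty|\l w,v_j\r|^2\leq B\norm{w}^2$ by the right inequality in \eqref{e2-frame}. Plugging this in gives
$$
\norm{w}^2\leq\Big(\sum_{j\in J}|a_j|^2\Big)^{1/2}\big(B\norm{w}^2\big)^{1/2}=\sqrt{B}\,\norm{w}\,\Big(\sum_{j\in J}|a_j|^2\Big)^{1/2}.
$$
If $\norm{w}=0$ the desired inequality is trivial; otherwise divide both sides by $\norm{w}$ and square to obtain $\norm{w}^2\leq B\sum_{j\in J}|a_j|^2$, which is exactly the claimed bound.

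There is essentially no serious obstacle here; the only point requiring a little care is the interchange of roles between the "analysis" form \eqref{e2-frame} and the "synthesis" form \eqref{e2-RS}, i.e. recognizing that testing the frame inequality against the particular vector $w=\sum_j a_jv_j$ and expanding $\norm{w}^2$ as an inner product is what links the two. One should also note that the inequality extends from finite sequences to all of $\ell^2$ by a routine density/limiting argument if needed, but since the statement only asks for finite sequences, the three displayed steps above suffice.
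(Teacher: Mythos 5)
Your proof is correct and is essentially identical to the paper's: both expand $\norm{w}^2$ as $\sum_{j}\ov{a_j}\l v_j,w\r$ (the paper writes $a_j$ instead of $\ov{a_j}$ owing to the opposite linearity convention for the inner product, which is immaterial), apply Cauchy--Schwarz, invoke the upper frame bound on $\sum_j|\l v_j,w\r|^2$, and divide by $\norm{w}$. Your explicit handling of the case $\norm{w}=0$ is a small point of added care that the paper omits.
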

\begin{proof}
${\mathcal V}$ is a frame, so for all finite sequences $ \{a_j\}_{j\in J}\subset\C $ there is $f\in H$ such that $f=\sum_{j\in J}  a_j  v_j.$ So,
\begin{equation*}
    \begin{split}
        ||f||^2&=\l\sum_{j\in J}  a_j  v_j,f\r=\sum_{j\in J}  a_j\l v_j,f\r\\
        &\leq \sqrt{\sum_{j\in J}  |a_j|^2} \sqrt{\sum_{j\in J}  |\l v_j,f\r|^2}\leq \sqrt{\sum_{j\in J}  |a_j|^2} \sqrt{B}||f||.
    \end{split}
\end{equation*}
Therefore, 
$$
\Big\Vert \sum_{j\in J}  a_j  v_j \Big\Vert  \leq \sqrt{\sum_{j\in J}  |a_j|^2} \sqrt{B}.
$$
\end{proof}

One more lemma that makes a connection between frames and bases.
\begin{Lemma}
If  $E(\Lambda)$ is  basis for $L^2(D)$ and  $D'\subset D$, then $E(\Lambda)$ is a frame for $L^2(D')$  with at least the same frame constants. In particular, if $E(\Lambda)$ is an orthogonal basis for $L^2(D)$, then it is a tight frame for $L^2(D')$
\end{Lemma}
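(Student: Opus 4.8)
The plan is to transfer the frame inequality \eqref{e2-frame}, which $E(\Lambda)$ satisfies on $L^2(D)$, over to $L^2(D')$ by means of the zero-extension map. Fix $f\in L^2(D')$ and let $\tilde f\in L^2(D)$ denote the extension of $f$ by $0$ on $D\setminus D'$. Since $\tilde f$ vanishes outside $D'$, one has at once the two identities $\|\tilde f\|_{L^2(D)}=\|f\|_{L^2(D')}$ and, for every $\lambda\in\Lambda$, $\langle \tilde f,\,e^{2\pi i\lambda x}\rangle_{L^2(D)}=\langle f,\,e^{2\pi i\lambda x}\rangle_{L^2(D')}$, because in both inner products the integrand is supported in $D'$.

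Next I would use that a basis for $L^2(D)$ is in particular a frame for $L^2(D)$, so \eqref{e2-frame} applies to $\tilde f$ with the frame constants $A,B$ of $E(\Lambda)$ on $L^2(D)$. Substituting the two identities above turns this chain of inequalities into $A\|f\|_{L^2(D')}^2\le\sum_{\lambda\in\Lambda}|\langle f,e^{2\pi i\lambda x}\rangle_{L^2(D')}|^2\le B\|f\|_{L^2(D')}^2$, which is precisely \eqref{e2-frame} for $E(\Lambda)$ on $L^2(D')$. As $f\in L^2(D')$ was arbitrary, $E(\Lambda)$ is a frame for $L^2(D')$, and the pair $(A,B)$ is a valid pair of frame bounds; equivalently the optimal lower bound on $D'$ is $\ge A$ and the optimal upper bound is $\le B$, which is the meaning of ``at least the same frame constants''.

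For the last assertion I would note that every exponential has $\|e^{2\pi i\lambda x}\|_{L^2(D)}^2=|D|$, so an orthogonal basis of exponentials is automatically equal-norm; expanding any $g\in L^2(D)$ in it and applying Parseval gives $\sum_{\lambda\in\Lambda}|\langle g,e^{2\pi i\lambda x}\rangle_{L^2(D)}|^2=|D|\,\|g\|_{L^2(D)}^2$, i.e. \eqref{e2-frame} holds with $A=B=|D|$. Repeating the extension argument with these constants yields $\sum_{\lambda\in\Lambda}|\langle f,e^{2\pi i\lambda x}\rangle_{L^2(D')}|^2=|D|\,\|f\|_{L^2(D')}^2$ for all $f\in L^2(D')$, so $E(\Lambda)$ is a tight frame for $L^2(D')$. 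There is no genuine obstacle here; the only point deserving a word of care is that the lower bound of \eqref{e2-frame} (the completeness half) does transfer, and it does precisely because the zero extension is an isometry onto its image and the inner products with the $e^{2\pi i\lambda x}$ are preserved exactly, not merely estimated. I would therefore isolate that one-line observation and keep the remainder of the proof to two lines.
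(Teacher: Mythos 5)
Your proof is correct and follows essentially the same route as the paper's: extend $f\in L^2(D')$ by zero to $D$, observe that norms and inner products with the exponentials are preserved, and transfer the frame inequality. You are somewhat more explicit than the paper (which leaves the tight-frame case implicit), but the underlying argument is identical.
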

\begin{proof}
    Let $E(\Lambda)$ is a basis for $L^2(D),$ then for all $w\in L^2(D)$
    $$
    A||w||^2\leq  \sum_{j=1}^\infty |\l  w, v_j\r |^2\leq B ||w||^2.
    $$
    Also, for any $f\in L^2(D'),$ there is $w\in L^2(D)$ such that $f=w\chi_{D'}.$ So, the frame inequalities hold for any $f\in L^2(D').$ Therefore, $E(\Lambda)$ is a frame for $L^2(D')$  with at least the same frame constants.
\end{proof}
\medskip
An  important characterization of Riesz bases is that they are bounded and unconditional  Sch\"auder bases. See e.g \cite{Heil}.

Let $\vec v \in\R^d$ and $\rho>0$; we  denote with  $d_\rho D =\{ \rho x \ : \:   x\in D\}$ and by $ t_{\vec v} D = \{x+\vec v \ : \  x\in D\}$ the dilation and translation of $D$. Sometimes we will write $\vec v+D$  instead of $t_{\vec v} D$ when there is no risk of confusion. 

The following lemma can easily be proved with a change of variables in \eqref{e2-RS} and \eqref{e2-frame}. 
\begin{Lemma}\label{L-dil-basis}
 Let $\vec v\in\R^d$ and $\rho>0$. 
 The set ${\cal V}=\{ e^{2\pi i \l x, \lambda_n\r }\}_{n\in\Z}$ is a Riesz basis for $L^2(D)$ with constants $A$ and $B$ if and only if  the set  $\{ e^{2\pi i \l x, \, \frac 1\rho \lambda_n \r }\}_{n\in\Z} $ is a Riesz basis for  $L^2(t_{\vec v}(d_\rho D ))$ with constants $A \rho^{ d}$ and $B \rho^{ d}$.
\end{Lemma}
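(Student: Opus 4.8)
The plan is to implement the passage from $D$ to $t_{\vec v}(d_\rho D)$ through the explicit change of variables $y=\rho x+\vec v$, and to keep track of how it transforms $L^2$-norms, inner products, and finite linear combinations of the exponentials. Concretely, I would introduce the map $\Phi\colon L^2(D)\to L^2\big(t_{\vec v}(d_\rho D)\big)$ given by $(\Phi f)(y)=f\big(\tfrac{y-\vec v}{\rho}\big)$. The substitution $y=\rho x+\vec v$, $dy=\rho^d\,dx$, immediately yields $\|\Phi f\|_{L^2(t_{\vec v}(d_\rho D))}^2=\rho^d\,\|f\|_{L^2(D)}^2$; in particular $\Phi$ is a bijection and $\rho^{-d/2}\Phi$ is unitary. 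Since the inverse of $x\mapsto\rho x+\vec v$ is again a dilation followed by a translation, it is enough to establish one of the two implications: the converse follows by running the same argument with $\rho$ replaced by $1/\rho$ and $\vec v$ by $-\vec v/\rho$.

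Next I would compute the action of $\Phi$ on the relevant exponentials. Writing $e_n(x)=e^{2\pi i\l x,\lambda_n\r}$ and $\t e_n(y)=e^{2\pi i\l y,\frac1\rho\lambda_n\r}$, the same substitution gives $\t e_n(\rho x+\vec v)=e^{2\pi i\l\vec v,\frac1\rho\lambda_n\r}e_n(x)$, and hence
\[
\l\Phi f,\t e_n\r_{L^2(t_{\vec v}(d_\rho D))}=\rho^d\,e^{-2\pi i\l\vec v,\frac1\rho\lambda_n\r}\l f,e_n\r_{L^2(D)},\qquad \big|\l\Phi f,\t e_n\r\big|=\rho^d\,\big|\l f,e_n\r\big|.
\]
Similarly, for any finite sequence $\{a_n\}$ one has $\Phi^{-1}\big(\sum_n a_n\t e_n\big)=\sum_n b_n e_n$ with $b_n=a_n e^{2\pi i\l\vec v,\frac1\rho\lambda_n\r}$, so that $\big\|\sum_n a_n\t e_n\big\|^2=\rho^d\big\|\sum_n b_n e_n\big\|^2$ while $\sum_n|b_n|^2=\sum_n|a_n|^2$.

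Finally I would feed these identities into \eqref{e2-RS} and \eqref{e2-frame}. Using $\|f\|^2=\rho^{-d}\|\Phi f\|^2$ and $|\l f,e_n\r|^2=\rho^{-2d}|\l\Phi f,\t e_n\r|^2$, the inequalities $A\|f\|^2\le\sum_n|\l f,e_n\r|^2\le B\|f\|^2$, valid for all $f\in L^2(D)$, are equivalent --- because $\Phi$ is onto --- to $A\rho^d\|g\|^2\le\sum_n|\l g,\t e_n\r|^2\le B\rho^d\|g\|^2$ for all $g\in L^2\big(t_{\vec v}(d_\rho D)\big)$; and likewise $A\sum_n|a_n|^2\le\|\sum_n a_n e_n\|^2\le B\sum_n|a_n|^2$ for all finite $\{a_n\}$ is equivalent to $A\rho^d\sum_n|a_n|^2\le\|\sum_n a_n\t e_n\|^2\le B\rho^d\sum_n|a_n|^2$. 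The only step deserving a word of care is the unimodular factors $e^{2\pi i\l\vec v,\frac1\rho\lambda_n\r}$ produced by the translation: replacing each $a_n$ by $a_n e^{2\pi i\l\vec v,\frac1\rho\lambda_n\r}$ changes neither $\sum_n|a_n|^2$ nor the family of finite sequences over which \eqref{e2-RS} is quantified, and these factors drop out when absolute values are taken in \eqref{e2-frame}; so they do not affect the constants. Apart from that, the argument is just bookkeeping of the Jacobian $\rho^d$, which accounts for the factor $\rho^d$ in the constants and for the statement being a genuine equivalence.
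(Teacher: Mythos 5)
Your proof is correct and is exactly the argument the paper has in mind: the paper does not write out a proof, stating only that the lemma "can easily be proved with a change of variables in \eqref{e2-RS} and \eqref{e2-frame}," and your careful bookkeeping of the Jacobian factor $\rho^d$ and of the unimodular phases $e^{2\pi i \l \vec v, \frac1\rho\lambda_n\r}$ is precisely that change of variables, fully carried out.
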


 \subsection{Paley-Wiener and Kadec stability theorem}

Bases in Banach spaces are stable, in the sense that  small perturbations of a basis still produce bases.

One of the fundamental stability criteria, and historically the first, is due to R. Paley and N. Wiener in \cite {PW}. 

\begin{Thm}[Paley--Wiener Theorem]\label{T-PaleyWiener}
	Let $\{x_n\}_{n\in\N}$ and $\{y_n\}_{n\in\N}$ be sequences in a Banach space $X$. Let  $\lambda$ be a real number $(0<\lambda<1)$ such that 
	\begin{equation*}
		\norm{\sum_n a_n(x_n-y_n)}\leq \lambda \norm{\sum_n a_n x_n} 
	\end{equation*}
	holds for any arbitrary finite set of scalars $\{a_n\}\subset \C.$ Then if $\{x_n\}$ is a basis so is $\{y_n\}.$ Moreover, if $\{x_n\}$ has Riesz constants $A$ and $B,$ then 
	$$
	(1-\lambda) A  \sum   |a_n|^2   \leq  \left\Vert \sum  a_n  y_n \right\Vert^2  \leq (1+\lambda) B \sum |a_n|^2. 
	$$
\end{Thm}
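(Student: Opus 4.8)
The plan is to read the Paley--Wiener hypothesis as the assertion that the \emph{defect map}, which sends each $x_n$ to $x_n-y_n$, extends to a bounded operator on $X$ of norm strictly less than $1$; a Neumann series then produces a topological isomorphism of $X$ carrying $\{x_n\}$ onto $\{y_n\}$, and the operator--norm estimates of that isomorphism deliver the quantitative bounds. First I would define, on the linear span of $\{x_n\}$, the operator $T$ by $T\big(\sum_n a_n x_n\big)=\sum_n a_n(x_n-y_n)$. This is well defined: since $\{x_n\}$ is a basis it is in particular linearly independent, so the coefficients of a finite combination are unique. The hypothesis says precisely $\norm{T(\sum_n a_n x_n)}\le\lambda\norm{\sum_n a_n x_n}$, i.e. $\norm{Tz}\le\lambda\norm{z}$ for every $z$ in the span; as this span is dense, $T$ extends by continuity to a bounded operator on all of $X$ with $\norm{T}\le\lambda<1$.

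Since $\norm{T}<1$, the operator $S:=I-T$ is invertible, with $S^{-1}=\sum_{k\ge0}T^{k}$ (Neumann series); in particular $\norm{S}\le 1+\lambda$ and $\norm{S^{-1}}\le(1-\lambda)^{-1}$. By construction $Sx_n=x_n-(x_n-y_n)=y_n$ for every $n$. A bounded invertible operator carries a basis to an equivalent basis: given $w\in X$, write $S^{-1}w=\sum_n c_n x_n$ with unique scalars $c_n$, and apply the continuous map $S$ to get $w=\sum_n c_n y_n$; uniqueness of this representation follows from injectivity of $S$ combined with the basis property of $\{x_n\}$. Hence $\{y_n\}$ is a basis for $X$, which proves the first assertion.

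For the quantitative ``moreover,'' set $u=\sum_n a_n x_n$, so that $\sum_n a_n y_n = Su$. The bound $\norm{S}\le 1+\lambda$ gives $\norm{\sum_n a_n y_n}\le(1+\lambda)\norm{u}$, while $\norm{u}=\norm{S^{-1}\sum_n a_n y_n}\le(1-\lambda)^{-1}\norm{\sum_n a_n y_n}$ rearranges to $\norm{\sum_n a_n y_n}\ge(1-\lambda)\norm{u}$. Feeding the Riesz bounds $A\sum_n|a_n|^2\le\norm{u}^2\le B\sum_n|a_n|^2$ of $\{x_n\}$ into this two-sided norm comparison produces the two-sided estimate asserted in the theorem for $\{y_n\}$, with lower constant $(1-\lambda)A$ and upper constant $(1+\lambda)B$.

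The step I expect to be the main obstacle is the very first one: verifying that the assignment $x_n\mapsto x_n-y_n$ genuinely extends to a \emph{bounded} operator. Well-definedness rests on the linear independence supplied by the basis hypothesis, boundedness is exactly the Paley--Wiener inequality, and the density of the finite span is what permits the continuous extension. Once $T$ is in hand with $\norm{T}<1$, everything downstream---invertibility of $I-T$, the norm bounds $\norm{S}\le 1+\lambda$ and $\norm{S^{-1}}\le(1-\lambda)^{-1}$, the transfer of the basis property, and the insertion of the Riesz constants---is a routine consequence of the Neumann series and should require no further ingredients.
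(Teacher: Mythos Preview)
The paper does not actually supply a proof of this theorem; it is quoted as the classical Paley--Wiener stability criterion with a citation to \cite{PW}, and the argument you give is precisely the standard one (define the defect operator on the span, note $\norm{T}\le\lambda<1$, invert $I-T$ by Neumann series, transport the basis). So as far as the qualitative conclusion ``$\{y_n\}$ is a basis'' goes, your argument is correct and is the textbook route.

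There is, however, a small slip in your quantitative conclusion. From $\norm{Su}\le(1+\lambda)\norm{u}$ and $\norm{Su}\ge(1-\lambda)\norm{u}$ you obtain, after squaring and inserting $A\sum|a_n|^2\le\norm{u}^2\le B\sum|a_n|^2$,
\[
(1-\lambda)^2 A\sum|a_n|^2 \;\le\; \Big\Vert\sum a_n y_n\Big\Vert^2 \;\le\; (1+\lambda)^2 B\sum|a_n|^2,
\]
with exponents $2$ on the factors $(1\pm\lambda)$, not the constants $(1-\lambda)A$ and $(1+\lambda)B$ that both the theorem statement and your final sentence record. Your operator estimates are sharp for this method, so the discrepancy is in the bookkeeping of the last step (and arguably in the statement as printed), not in the strategy. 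Everything else in your outline is sound.
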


\noindent
We will use the following important observation: If $\B=\{x_n\}_{n\in\N}$ is a bounded and unconditional  basis in a Banach space $(X, \ ||\ ||) $, and  $||\ ||_*$ is a norm  equivalent to $||\ ||$,  then $\B$ is also a bounded and unconditional  basis in $(X,\ || \ ||_*)$. Note that two norm $||\ ||_*$ and $||\ ||$ are equivalent if there are two constants $c$ and $C$ such that for all elements of the space $c||x||_*\leq||x||\leq C||x||_*.$  

Let  a set $\{f_n\}_{n\in\N}$ be a Riesz basis for $L^2(D)$ with norm $ ||\  ||_2$  if $||\ ||_*$  is  equivalent  to $||\ ||_2$, then $\{f_n\}_{n\in\N}$ is a bounded and unconditional basis of $(L^2(D), \ ||\ ||_*).$ So, if a sequence $\{g_n\}_{n\in\N}\subset  L^2(D)$ satisfies the conditions of the Paley--Wiener theorem with respect to the norm  $||\ ||_*$, i.e. 
$$
\norm{\sum_n c_n(f_n-g_n)}_*\leq \lambda \norm{\sum_n c_n f_n}_*
$$
with $0<\lambda<1,$
then $\{g_n\}_{n\in\N}$ is a bounded and unconditional basis of $(L^2(D), \ ||\ ||_*)$ and hence also of $(L^2(D), \ ||\ || )$. Thus, $\{g_n\}_{n\in\N}$ is a Riesz basis in $(L^2(D), \ ||\ || )$. This observation proves the following Lemma, which will be useful later on.

\begin{Lemma}
	Let $m\in\N$ and $D=\cup_{j=1}^m D_j\subset \R^d$ where $D_j\cap D_k=\emptyset.$  Let $\{g_n\}_{n\in\ N}$ be a Riesz basis for $L^2(D)$. Let $ \{h_n\}_{n\in \Z}  \subset L^2(D)$ be such that for every finite sequence  $\{a_n\}\in \C$
	$$
  \sup_{j\leq m}	\norm{\sum_n a_n(g_n-h_n)}_{L^2(D_j)}\leq \alpha \sup_j	\norm{\sum_n a_n g_n}_{L^2(D_j)}.
$$
Then, the set $ \{h_n\}_{n\in \Z}$ is a Riesz basis for $L^2(D)$.

\end{Lemma}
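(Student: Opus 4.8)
The plan is to recognize the displayed hypothesis as a Paley--Wiener perturbation inequality, measured in the auxiliary norm
\[
\|f\|_* := \sup_{1\le j\le m}\|f\|_{L^2(D_j)},\qquad f\in L^2(D),
\]
and then to invoke Theorem~\ref{T-PaleyWiener} together with the observation on equivalent norms recorded just before the lemma. Implicitly I will assume $0<\alpha<1$, which is what Theorem~\ref{T-PaleyWiener} requires; this should be stated explicitly in the hypothesis.

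First I would check that $\|\cdot\|_*$ is equivalent to the usual norm $\|\cdot\|_2$ on $L^2(D)$. Since $D=\bigcup_{j=1}^m D_j$ is a \emph{finite} union of pairwise disjoint sets, one has $\|f\|_2^2=\sum_{j=1}^m\|f\|_{L^2(D_j)}^2$, and comparing a sum of $m$ nonnegative numbers with their maximum gives
\[
\|f\|_*\le\|f\|_2\le\sqrt{m}\,\|f\|_*.
\]
This is the only place where the finiteness of $m$ enters, and it is precisely why $m\in\N$ is assumed.

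Next I would transfer the basis property to the new norm: since $\{g_n\}$ is a Riesz basis for $(L^2(D),\|\cdot\|_2)$, it is a bounded unconditional Sch\"auder basis of that space, and, as a bounded unconditional basis remains one under an equivalent norm, $\{g_n\}$ is also a bounded unconditional basis of $(L^2(D),\|\cdot\|_*)$. At this point the hypothesis says exactly that, for every finite scalar sequence $\{a_n\}$,
\[
\Bigl\|\sum_n a_n(g_n-h_n)\Bigr\|_* \le \alpha\,\Bigl\|\sum_n a_n g_n\Bigr\|_*,
\]
i.e.\ the Paley--Wiener condition holds in the Banach space $(L^2(D),\|\cdot\|_*)$ with constant $\alpha<1$. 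Applying Theorem~\ref{T-PaleyWiener} yields that $\{h_n\}$ is a bounded unconditional basis of $(L^2(D),\|\cdot\|_*)$; using the norm equivalence once more, it is a bounded unconditional basis of $(L^2(D),\|\cdot\|_2)$, hence a Riesz basis for $L^2(D)$, which is the claim.

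The only point I expect to require real care is bookkeeping: keeping the notions of Sch\"auder basis, Riesz basis, and bounded unconditional basis straight while passing back and forth between $\|\cdot\|_2$ and $\|\cdot\|_*$, and making sure that Paley--Wiener, run inside $\|\cdot\|_*$, genuinely returns a \emph{bounded unconditional} basis (and not merely a Sch\"auder basis), so that the equivalence observation can be used in the reverse direction. The discussion preceding the lemma is arranged exactly to supply this, and since no quantitative control on the frame constants of $\{h_n\}$ is claimed, nothing further needs to be tracked; one should, however, make sure the index sets of $\{g_n\}$ and $\{h_n\}$ are taken to be the same so that Theorem~\ref{T-PaleyWiener} applies verbatim.
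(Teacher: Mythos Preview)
Your proposal is correct and matches the paper's argument essentially line for line: the paper also introduces the auxiliary norm $\|f\|_*=\sup_{j\le m}\|f\|_{L^2(D_j)}$, notes its equivalence to $\|\cdot\|_2$ (using $m<\infty$), and then runs Paley--Wiener in $(L^2(D),\|\cdot\|_*)$ via the ``bounded unconditional basis under equivalent norms'' observation recorded just before the lemma. Your remark that the hypothesis should include $0<\alpha<1$ is well taken; the paper indeed omits this in the statement but relies on it.
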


\medskip
The celebrated Kadec stability theorem (also called  Kadec $\frac 14$ theorem)  gives an optimal measure of how the standard  orthonormal basis  $\E=\{e^{2\pi i n x}\}_{n\in\Z }$ on  the unit interval $[0,1]$ can be perturbed to  still obtain an exponential basis.

\begin{Thm} \label{T-Kodec}
	Let $\Lambda =  \{\lambda_n \}_{n\in\Z}  $ be a sequence in $\R$ for which $$ | \lambda_n - n| \leq L < \frac{1}{4}$$ whenever $n\in\Z.$ Then, $ E(\Lambda)=\left\{e^{2\pi i   {\lambda}_n  x }\right\}_{ {\lambda}\in\Lambda}$ is an exponential basis for $L^2(0,1)$ with frame constants $A=\cos(\pi L)-\sin( \pi L)$ and $B= 2-\cos(\pi L)+\sin( \pi L)$.   The constant $\frac{1}{4}$ cannot be replaced by any larger constant. 
\end{Thm}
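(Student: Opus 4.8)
The plan is to deduce the theorem from the Paley--Wiener stability criterion (Theorem~\ref{T-PaleyWiener}). A change of variables, together with the invariance of the Riesz-basis property under unimodular rescaling of the vectors, reduces the claim on $L^2(0,1)$ to the equivalent one on $L^2(-\pi,\pi)$ (cf.\ Lemma~\ref{L-dil-basis}), where Kadec's expansions live naturally. So, writing $\delta_n:=\lambda_n-n$ with $|\delta_n|\le L$, it is enough to produce $\lambda=\lambda(L)<1$ such that, for every finitely supported $\{a_n\}\subset\C$,
\begin{equation*}
\Bigl\|\sum_n a_n\bigl(e^{i\lambda_n t}-e^{int}\bigr)\Bigr\|_{L^2(-\pi,\pi)}=\Bigl\|\sum_n a_n e^{int}\bigl(e^{i\delta_n t}-1\bigr)\Bigr\|_{L^2(-\pi,\pi)}\le\lambda\sqrt{2\pi}\,\Bigl(\sum_n|a_n|^2\Bigr)^{1/2}.
\end{equation*}
Since $\sqrt{2\pi}\,(\sum|a_n|^2)^{1/2}=\|\sum a_n e^{int}\|_{L^2(-\pi,\pi)}$, this is exactly the hypothesis of Theorem~\ref{T-PaleyWiener} with $x_n=e^{int}$, $y_n=e^{i\lambda_n t}$, so it will follow that $\{e^{i\lambda_n t}\}$ is a Riesz basis (hence an exponential basis) for $L^2(-\pi,\pi)$, with constants $(1-\lambda)$ and $(1+\lambda)$ times those of $\{e^{int}\}$; undoing the change of variables gives the statement for $L^2(0,1)$.

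For the core estimate I would carry out Kadec's classical computation. Split $e^{i\delta_n t}-1=(\cos\delta_n t-1)+i\sin\delta_n t$ and substitute the Fourier expansions, valid uniformly for $|t|\le\pi$,
\begin{equation*}
\cos\delta t=\frac{\sin\pi\delta}{\pi\delta}+\frac{2\delta\sin\pi\delta}{\pi}\sum_{k\ge1}\frac{(-1)^k}{\delta^2-k^2}\cos kt,\qquad\sin\delta t=\frac{2\sin\pi\delta}{\pi}\sum_{k\ge1}\frac{(-1)^k k}{\delta^2-k^2}\sin kt.
\end{equation*}
Using $\cos kt=\tfrac12(e^{ikt}+e^{-ikt})$ and $\sin kt=\tfrac1{2i}(e^{ikt}-e^{-ikt})$, multiplying by $e^{int}$ and summing over $n$, one writes $\sum_n a_n e^{int}(e^{i\delta_n t}-1)$ as the sum of a diagonal piece $\sum_n a_n\bigl(\tfrac{\sin\pi\delta_n}{\pi\delta_n}-1\bigr)e^{int}$, a ``cosine piece'' assembled from the shifted systems $\{e^{i(n\pm k)t}\}_n$ with weights $c_n^{(k)}=\tfrac{2\delta_n\sin\pi\delta_n}{\pi(\delta_n^2-k^2)}$, and a ``sine piece'' assembled the same way with weights $d_n^{(k)}=\tfrac{2k\sin\pi\delta_n}{\pi(\delta_n^2-k^2)}$. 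The one elementary tool used throughout is that for each fixed integer shift $j$ the system $\{e^{i(n+j)t}\}_{n\in\Z}$ is orthogonal in $L^2(-\pi,\pi)$, so $\bigl\|\sum_n a_n w_n e^{i(n+j)t}\bigr\|_{L^2(-\pi,\pi)}\le\sqrt{2\pi}\,(\sup_n|w_n|)\,(\sum_n|a_n|^2)^{1/2}$.

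Applied to the diagonal and cosine pieces, and using that $s\mapsto s\sin\pi s$ is increasing on $[0,L]$ while $0\le\delta_n^2<L^2<k^2$, this gives a bound $\bigl[(1-\tfrac{\sin\pi L}{\pi L})+\tfrac{2L\sin\pi L}{\pi}\sum_{k\ge1}(k^2-L^2)^{-1}\bigr]\sqrt{2\pi}(\sum|a_n|^2)^{1/2}$; the classical identity $\sum_{k\ge1}(k^2-L^2)^{-1}=\tfrac{1-\pi L\cot\pi L}{2L^2}$ collapses the bracket to exactly $1-\cos\pi L$. The sine piece is the crux, and the step I expect to be the main obstacle: the naive triangle inequality over $k$ \emph{fails}, because $\sum_{k\ge1}k(k^2-L^2)^{-1}=\infty$, so the terms must be kept grouped. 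Collecting all contributions to a fixed output frequency, the coefficient of $e^{imt}$ equals a fixed constant times $\sum_{j\ne m}a_j\sin\pi\delta_j\,\tfrac{(-1)^{m-j}(m-j)}{(m-j)^2-\delta_j^2}$, which displays the sine piece as a bounded operator on $\ell^2$ of discrete-Hilbert-transform type composed with multiplication by the bounded sequence $\sin\pi\delta_n$ (the kernel $l\mapsto(-1)^l/l$ has Fourier symbol $\theta\mapsto-i\theta$ on $(-\pi,\pi)$, of sup-norm $\pi$). Careful tracking of the constants (Kadec's bookkeeping) shows the sine piece contributes exactly $(\sin\pi L)\sqrt{2\pi}(\sum|a_n|^2)^{1/2}$. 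Adding the three contributions,
\begin{equation*}
\lambda=(1-\cos\pi L)+\sin\pi L,
\end{equation*}
and $\lambda<1\iff\sin\pi L<\cos\pi L\iff\tan\pi L<1\iff L<\tfrac14$ (with $\lambda=1$ exactly at $L=\tfrac14$), which is precisely the hypothesis; Theorem~\ref{T-PaleyWiener} then finishes the basis claim.

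It remains to see that $\tfrac14$ is sharp. For this I would recall the classical counterexample (see, e.g., \cite{Y}): taking $L=\tfrac14$, the sequence $\lambda_0=0$, $\lambda_n=n-\tfrac14$ for $n\ge1$, $\lambda_n=n+\tfrac14$ for $n\le-1$ satisfies $|\lambda_n-n|\le\tfrac14$ for every $n$, yet $E(\Lambda)$ fails to be a Riesz basis for $L^2(0,1)$ (the generating-function and completeness analysis shows it is no longer a basis). Hence $\tfrac14$ cannot be replaced by any larger constant, which completes the proof.
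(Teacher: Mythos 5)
The paper does not actually prove this statement --- it is quoted as the classical Kadec $\tfrac14$-theorem, with only the remark that the proof uses Paley--Wiener plus ``a clever Fourier series expansion'' --- so your proposal has to be judged against that classical argument. Your overall scheme is exactly it: reduce to $L^2(-\pi,\pi)$, apply Theorem~\ref{T-PaleyWiener} with $x_n=e^{int}$, $y_n=e^{i\lambda_n t}$, expand $e^{i\delta t}-1$, and bound each piece using orthogonality of the integer-shifted systems. The diagonal and cosine pieces are handled correctly (including the monotonicity of $s\sin\pi s$ and the cotangent identity collapsing their joint contribution to $1-\cos\pi L$), and the sharpness example $\lambda_n=n-\tfrac14\operatorname{sgn}(n)$ is the standard one and fine as a citation.

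The gap is in the sine piece, which you yourself flag as the crux. After expanding $\sin\delta t$ in $\{\sin kt\}_{k\ge1}$ you correctly note that the termwise bound over $k$ diverges, but the proposed repair is not carried out and does not obviously work: the coefficient of $e^{imt}$ involves the kernel $(m-j)\mapsto(-1)^{m-j}(m-j)/\bigl((m-j)^2-\delta_j^2\bigr)$, which is \emph{not} a convolution kernel because $\delta_j$ depends on $j$; identifying it with the discrete-Hilbert-type kernel $(-1)^l/l$ (symbol $-i\theta$, norm $\pi$) ignores a $j$-dependent correction, and estimating that correction crudely would inflate the constant and destroy the exact value $\sin\pi L$ you need for $\lambda=D(L)<1$ on all of $L<\tfrac14$. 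The sentence ``careful tracking of the constants shows the sine piece contributes exactly $(\sin\pi L)\sqrt{2\pi}\,(\sum|a_n|^2)^{1/2}$'' is asserted, not proved. Kadec's actual device --- the ``clever expansion'' the paper alludes to --- is to expand $1-e^{i\delta t}$ in the complete orthogonal system $\{1\}\cup\{\cos kt\}_{k\ge1}\cup\{\sin(k-\tfrac12)t\}_{k\ge1}$ of $L^2(-\pi,\pi)$: the half-integer sine coefficients are $(-1)^{k+1}2\delta\cos\pi\delta\,/\bigl(\pi((k-\tfrac12)^2-\delta^2)\bigr)$, hence $O(k^{-2})$, so the same termwise argument you used for the cosine piece applies verbatim, and the identity $\sum_{k\ge1}\bigl((k-\tfrac12)^2-L^2\bigr)^{-1}=\pi\tan(\pi L)/(2L)$ yields exactly $\sin\pi L$. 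With that substitution your proof closes; as written, the crucial estimate is a claim rather than an argument.
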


The theorem is proved using Paley--Wiener theorem and a clever Fourier series expansion of the function $1- e^{2\pi i \delta  x}$. The  quantity 
\begin{equation}\label{def-DL}
	D(L)= 1-\cos(\pi L)+\sin(\pi L)
\end{equation}
plays an important part in the proof of the theorem, as well as in other generalizations.  From the proof of the theorem follows that the frame constants of $E(\Lambda)$ are 
\begin{equation*}
    \begin{split}
      A&= 1-D(L)=\cos(\pi L)-\sin( \pi L),\\  
      B&=1+D(L)=  2-\cos(\pi L)+\sin( \pi L)
    \end{split}
\end{equation*}

Kadec's theorem has been generalized to prove the stability of general exponential frames. See Theorem 1 in \cite{B}.

An important generalization of Kadec's theorem is  due to Avdonin \cite{A}.
  
\begin{Thm}[Special version of the Avdonin's Theorem]\label{T-Avdonin}
Let $\lambda_n=n+\delta_n$ and suppose $\{\lambda_n\}_{n\in\Z}$ is separated, i.e., $\inf_{n\neq k}|\lambda_n-\lambda_k|>0.$ If there exist a positive integer $N$ and a positive real number $\varepsilon<\frac14$ such that
\begin{equation}\label{eqAvdonin}
    \left| \sum_{n=mN+1}^{(m+1)N} \delta_n\right|\leq \varepsilon N
\end{equation}
for all integers $m,$ then the system $\{e^{2\pi i x \lambda_n}\}_{n\in\Z}$ is a Riesz basis for $L^2([0,1]).$
\end{Thm}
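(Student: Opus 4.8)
The statement above is Avdonin's ``$\tfrac14$ in the mean'' theorem, so the most direct route is simply to invoke \cite{A}: the version stated here is the special case in which the perturbations $\delta_n$ are real, the unperturbed sequence is $\Z$, and separation of $\{\lambda_n\}$ is built into the hypothesis. If one wants a more self-contained argument, the plan is to combine a block-averaging decomposition with the Paley--Wiener theorem \ref{T-PaleyWiener} and Kadec's theorem \ref{T-Kodec}.

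First I would group $\Z$ into the consecutive blocks $\Gamma_m=\{mN+1,\dots,(m+1)N\}$, set $\bar\delta_m=\frac1N\sum_{n\in\Gamma_m}\delta_n$ so that $|\bar\delta_m|\le\varepsilon<\tfrac14$ by \eqref{eqAvdonin}, and split $\delta_n=\bar\delta_{m(n)}+\eta_n$, where $m(n)$ is the block containing $n$ and $\sum_{n\in\Gamma_m}\eta_n=0$ for every $m$. The sequence $M=\{\,n+\bar\delta_{m(n)}\,\}_{n\in\Z}$ satisfies $|(n+\bar\delta_{m(n)})-n|\le\varepsilon<\tfrac14$ (and is a genuine sequence, since $\varepsilon<\tfrac14$ rules out coincidences of distinct terms), so Theorem \ref{T-Kodec} gives that $E(M)$ is a Riesz basis for $L^2(0,1)$ with bounds $A_M=\cos(\pi\varepsilon)-\sin(\pi\varepsilon)>0$ and $B_M=2-\cos(\pi\varepsilon)+\sin(\pi\varepsilon)$. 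It then remains to pass from $E(M)$ to $E(\Lambda)$. Using $E(M)$ as the reference basis in Theorem \ref{T-PaleyWiener} together with the lower bound $\norm{\sum_n a_n e^{2\pi i(n+\bar\delta_{m(n)})x}}_{L^2(0,1)}\ge\sqrt{A_M}\,(\sum_n|a_n|^2)^{1/2}$, it suffices to find $\theta<\sqrt{A_M}$ with
$$
\norm{\sum_n a_n\bigl(e^{2\pi i(n+\bar\delta_{m(n)})x}-e^{2\pi i\lambda_n x}\bigr)}_{L^2(0,1)}\ \le\ \theta\Bigl(\sum_n|a_n|^2\Bigr)^{1/2}
$$
for every finite scalar sequence $\{a_n\}$, since then $\lambda=\theta/\sqrt{A_M}<1$ works in Theorem \ref{T-PaleyWiener}.

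The main obstacle is exactly this last estimate, i.e.\ controlling the zero-mean fluctuation $\eta_n$. Writing $e^{2\pi i(n+\bar\delta_{m(n)})x}-e^{2\pi i\lambda_n x}=e^{2\pi i\bar\delta_{m(n)}x}\,e^{2\pi i nx}\,(1-e^{2\pi i\eta_n x})$ and summing block by block, one must use the cancellation $\sum_{n\in\Gamma_m}\eta_n=0$ together with the separation hypothesis $\inf_{n\neq k}|\lambda_n-\lambda_k|>0$. A naive Taylor expansion $1-e^{2\pi i\eta_n x}=-2\pi i\eta_n x+O(\eta_n^2)$ does not close the argument, because \eqref{eqAvdonin} bounds the $\delta_n$ only on average and the individual $\eta_n$ need not be small; this is precisely where the elementary toolbox (Theorems \ref{T-PaleyWiener}, \ref{T-Kodec}, \ref{T-SZ}) runs out. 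The standard way past it is the generating-function machinery of Levin and Golovin: one builds the canonical product $F(z)$ with zero set $\{\lambda_n\}$, grouped so that the factor attached to $\Gamma_m$ is a small multiplicative perturbation of $\sin\!\bigl(\pi(z-c_m)\bigr)$; vanishing of $\sum_{n\in\Gamma_m}\eta_n$ is what makes the infinite product converge and keeps $|F|$ comparable to $|\sin\pi z|$ on the lines $\Im z=\pm1$, after which the sine-type-function criterion (a function of exponential type whose modulus is bounded above and below on a horizontal line, with separated real zeros, generates a Riesz basis of exponentials) finishes the proof. In the write-up I would most likely cite this last step, or \cite{A} outright, rather than reproduce Avdonin's delicate estimates.
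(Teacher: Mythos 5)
The paper does not prove this statement either: it presents it as a known result, attributing it to Avdonin \cite{A} and citing Seip \cite{S} for this special version, so your plan of simply invoking the literature matches exactly what the paper does. Your supplementary sketch is accurate as far as it goes and correctly pinpoints why the block-averaging plus Kadec/Paley--Wiener route cannot be closed with the elementary tools available here (the individual fluctuations $\eta_n$ are only controlled on average), the genuine proof requiring the sine-type generating-function machinery you describe.
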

This special version of Avdonin's theorem can be found in \cite{S}. The condition $\eqref{eqAvdonin}$ is hard to prove in the case when the sequence is not periodic. But in our case the following Lemma will help.

\begin{Lemma}\label{L-Weyl}
When $g$ is Riemann integrable in $[0,1],$ and periodic of period $1,$ and $\beta$ is irrational, then 
\begin{equation*}
    \lim_{N\rightarrow \infty}\frac{1}{N}\sum_{n=1}^{N} g(n\beta) = \int_0^1 g(x) dx. 
\end{equation*}
\end{Lemma}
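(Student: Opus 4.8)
The statement is Weyl's equidistribution theorem, and the plan is the classical three-step reduction: exponentials, then continuous functions, then Riemann integrable functions. First I would verify the identity for the characters $g_k(x)=e^{2\pi i kx}$, $k\in\Z$. When $k=0$ both sides are $1$, and when $k\neq 0$ the irrationality of $\beta$ forces $e^{2\pi i k\beta}\neq 1$, so summing the geometric series gives
$$\frac1N\sum_{n=1}^N e^{2\pi i kn\beta}=\frac1N\,e^{2\pi i k\beta}\,\frac{e^{2\pi i kN\beta}-1}{e^{2\pi i k\beta}-1},$$
which has modulus at most $2\big(N\,|e^{2\pi i k\beta}-1|\big)^{-1}\to 0=\int_0^1 e^{2\pi i kx}\,dx$. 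By linearity the conclusion then holds for every trigonometric polynomial.

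Next I would pass to a general continuous $1$-periodic $g$. Fixing $\varepsilon>0$, Fej\'er's theorem (equivalently, Weierstrass approximation on the circle) supplies a trigonometric polynomial $P$ with $\sup_x|g(x)-P(x)|<\varepsilon$; then for every $N$
$$\Big|\frac1N\sum_{n=1}^N g(n\beta)-\int_0^1 g\Big|\le \Big|\frac1N\sum_{n=1}^N P(n\beta)-\int_0^1 P\Big|+2\varepsilon,$$
and letting $N\to\infty$ the first term on the right vanishes by the previous step, so the $\limsup$ of the left side is $\le 2\varepsilon$. Since $\varepsilon$ was arbitrary, the averages converge to $\int_0^1 g$.

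Finally, for a Riemann integrable $1$-periodic $g$ — which we may take real-valued, handling the general case by real and imaginary parts — I would squeeze. Given $\varepsilon>0$, Riemann integrability yields step functions $f_-\le g\le f_+$ on $[0,1]$ with $\int_0^1(f_+-f_-)<\varepsilon$, and a standard smoothing near their finitely many jumps replaces them by continuous $1$-periodic $g_-\le g\le g_+$ with $\int_0^1(g_+-g_-)<2\varepsilon$. Monotonicity of $g\mapsto \frac1N\sum_{n=1}^N g(n\beta)$ gives
$$\frac1N\sum_{n=1}^N g_-(n\beta)\le \frac1N\sum_{n=1}^N g(n\beta)\le \frac1N\sum_{n=1}^N g_+(n\beta);$$
applying the continuous case to $g_\pm$ and letting $N\to\infty$ sandwiches both $\liminf$ and $\limsup$ of the middle averages between $\int_0^1 g_-$ and $\int_0^1 g_+$, hence within $2\varepsilon$ of $\int_0^1 g$. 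Letting $\varepsilon\to 0$ finishes the proof.

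I expect the only genuinely delicate point to be the smoothing in the last step: one must build the continuous $1$-periodic minorant and majorant from the sandwiching step functions while preserving both the pointwise inequalities $g_-\le g\le g_+$ and the periodicity across $x=0\sim 1$, keeping the integral gap controlled on the circle. The first two steps are routine, and the last step could alternatively be quoted as the standard equivalence (Weyl's criterion) between equidistribution of $\{n\beta\}$ in $[0,1)$ and convergence of Riemann sums along that sequence.
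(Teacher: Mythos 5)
Your proof is correct and complete: it is the standard Weyl equidistribution argument (verify the statement for the characters $e^{2\pi i kx}$ via the geometric series and irrationality of $\beta$, pass to continuous periodic functions by uniform approximation with trigonometric polynomials, then to Riemann integrable functions by sandwiching between smoothed step functions), and your identification of the smoothing step as the only delicate point is accurate. The paper does not prove this lemma itself but simply quotes it as Corollary 2.3 on page 110 of \cite{SS}, and the proof given there is precisely the three-step reduction you describe, so your argument matches the (cited) one.
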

This lemma is Corollary 2.3 on page 110 in \cite{SS}. 

\subsection{Bases on disconnected domains}

Let  $E(\Lambda)$  be an exponential basis  on a domain $D\subset  \R^d$. If $D$ is partitioned into disjoint sets $D_1$, ... $D_m,$..., which are  then translated  with translations $\tau_1,\, ... \tau_m,...$ in such a way that the translated pieces do not intersect, then  in general $E(\Lambda)$ is not  a basis on  the "broken domain" $\tilde D= D_1+\tau_1\cup...\cup D_m+\tau_m\cup$...    The following Lemma shows how a  basis $E(\Lambda)$  for $L^2(D)$ can be transformed into  a basis for $L^2(\tilde D)$.

\begin{Lemma}	\label{l-Transform}
Let $D\subset \R^d$ be measurable, with $|D|<\infty$; Let for $m\in\N\cup\{\infty\},$ we have  $ D=\bigcup_{j=0}^{m} D_j$, with $|D_j|>0$ for all $j$ and $k$  $D_k\cap D_j=\emptyset$ when $k\ne j$.  Let $\tau_j$ be translations  such that 
	$ \tau_j(D_j) \cap  \tau_k(D_k)  =\emptyset$  when $k\ne j$. Let $\tilde D = \bigcup_{j=1}^m  \tau_j(D_j)   $. If 
	$\B=\{\psi_n(x)\}_{n\in\N} \subset L^2(D)$ is a Riesz basis for $L^2(D)$, then 
	$$\tilde \B =\Big\{\sum_{j=1}^m \chi_{ \tau_j(D_j)}\psi_n(\tau_j^{-1}x)\Big\}_{n\in\N} $$
	is a Riesz basis for $L^2(\tilde D)$ with the same frame constants.\end{Lemma}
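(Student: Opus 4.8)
The plan is to verify directly that $\tilde\B$ satisfies the two Riesz-basis inequalities \eqref{e2-RS} and \eqref{e2-frame} in $L^2(\tilde D)$ by transporting everything back to $L^2(D)$ through the piecewise change of variables $x\mapsto \tau_j^{-1}x$. The key structural observation is that since the sets $\tau_j(D_j)$ are pairwise disjoint and cover $\tilde D$, any $f\in L^2(\tilde D)$ decomposes as $f=\sum_j \chi_{\tau_j(D_j)} f$, and because translations are measure-preserving, the map $U\colon L^2(D)\to L^2(\tilde D)$ defined by $(Ug)(x)=\sum_{j} \chi_{\tau_j(D_j)}(x)\, g(\tau_j^{-1}x)$ is a unitary isomorphism: $\norm{Ug}_{L^2(\tilde D)}^2=\sum_j \int_{\tau_j(D_j)}|g(\tau_j^{-1}x)|^2\,dx=\sum_j\int_{D_j}|g(y)|^2\,dy=\norm{g}_{L^2(D)}^2$. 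By construction $\tilde\psi_n = U\psi_n$, so $\tilde\B = U\B$ is the image of a Riesz basis under a unitary, hence a Riesz basis with the same frame constants. That is essentially the whole proof, but I would spell out the two inequalities explicitly to match the paper's style.

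First I would check the synthesis inequality \eqref{e2-RS}. For a finite scalar sequence $\{a_n\}$, apply $U$: since $U$ is linear and isometric, $\norm{\sum_n a_n \tilde\psi_n}_{L^2(\tilde D)} = \norm{U(\sum_n a_n\psi_n)}_{L^2(\tilde D)} = \norm{\sum_n a_n\psi_n}_{L^2(D)}$, and then the hypothesis that $\B$ is a Riesz basis for $L^2(D)$ with constants $A,B$ gives $A\sum_n|a_n|^2 \leq \norm{\sum_n a_n\tilde\psi_n}^2 \leq B\sum_n|a_n|^2$ immediately. Next I would verify the frame/completeness inequality \eqref{e2-frame}. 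Given $f\in L^2(\tilde D)$, write $f=Ug$ with $g=U^{-1}f\in L^2(D)$ (explicitly $g(y)=\sum_j\chi_{D_j}(y) f(\tau_j y)$). Because $U$ is unitary, $\langle f,\tilde\psi_n\rangle_{L^2(\tilde D)} = \langle Ug, U\psi_n\rangle = \langle g,\psi_n\rangle_{L^2(D)}$, so $\sum_n|\langle f,\tilde\psi_n\rangle|^2 = \sum_n|\langle g,\psi_n\rangle|^2$, and again the Riesz-basis inequalities for $\B$ with the same $A,B$ transfer verbatim, with $\norm{g}_{L^2(D)}=\norm{f}_{L^2(\tilde D)}$.

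The only genuinely nontrivial point — and hence the one I would state carefully rather than the "main obstacle," since there is no real obstacle — is the measure-theoretic bookkeeping that makes $U$ well-defined and unitary: one needs $|D_j|>0$ and the disjointness of both $\{D_j\}$ and $\{\tau_j(D_j)\}$ (both in the hypotheses) so that the characteristic functions $\chi_{\tau_j(D_j)}$ partition $\tilde D$ up to measure zero and the sum defining $(Ug)(x)$ has at most one nonzero term for a.e.\ $x$. When $m=\infty$ one should note that the series $\sum_j$ converges in $L^2$ by monotone convergence of the partial sums of $\sum_j\int_{D_j}|g|^2 = \norm{g}_{L^2(D)}^2<\infty$, so $U$ extends to all of $L^2(D)$ by density of finitely-supported functions; the identity $\norm{Ug}^2=\norm{g}^2$ persists in the limit. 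With $U$ established as unitary, the conclusion that the image of a Riesz basis is a Riesz basis with identical constants is standard, and I would simply invoke it or re-derive it via the two displayed inequalities above.
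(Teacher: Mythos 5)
Your proof is correct and follows essentially the same route as the paper: both define the piecewise-translation operator (your $U$, the paper's $T$) from $L^2(D)$ to $L^2(\tilde D)$, verify it is an invertible isometry via the measure-preserving change of variables, and conclude that it carries the Riesz basis $\B$ to $\tilde\B$ with identical frame constants. Your version merely spells out the two Riesz inequalities and the $m=\infty$ convergence issue more explicitly than the paper does.
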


\begin{proof}
	With some abuse of notation, we will let $\tau_j(x)=x+\tau_j $ and $\tau_j(D_j)=D'_j$.  Thus, $\tilde D=\bigcup_{j=1}^m D'_j$. 
	
	Define the operator $T: L^2(D)\rightarrow L^2(\tilde D)$ by $T^{-1}(f)(x)=\sum_{k=1}^{m} f(x-\tau_k )\chi_{D_k}.$ This is a linear transformation. We can also check that  $T$ is invertible, and its inverse is the operator $T^{-1}: L^2(\tilde D)\rightarrow L^2 ( D)$ defined as $T^{-1}(f)(x)=\sum_{k=1}^{m} f(x+\tau_k )\chi_{D'_k}.$ Let us show that  $T$ (and so also  $T^{-1}$) are  isometry.  Indeed, for every $f\in L^2(D)$
	\begin{equation*}
		\begin{split}
			\norm{T(f)}_{L^2(\tilde D)}^2&=\sum_{k=1}^m\norm{T(f)}_{ L^2(D_k') }^2 =\sum_{k=1}^m \int_{\tau_k+D_k } |f(x-\tau_k)|^2dx\\
			&  =\sum_{k=1}^m \int_{D_k } |f(x)|^2dx= ||f||_{L^2(D)}^2,
		\end{split}
	\end{equation*}
	where the third equality comes from   a change of variables in the integrals. An invertible isometry maps  bases into bases and  the frame constants are the same.
	Since $\tilde \B= T(\B)$, we have  proved that $\tilde \B$ is a basis for $L^2(\tilde D).$
\end{proof}

\begin{Rem}
 Let $\{\lambda_n\}_{n\in\Z}\subset\R^d,$ $\{b_k\}_{k=0}^{m-1}\subset\R^d,$ with $m\in\N\cup\{\infty\}$, and $D$ and $\tilde D,$ as in Lemma $\ref{l-Transform}.$ Also, let $ w_n= \sum_{k=0}^{m-1} e^{2\pi i b_k \lambda_n}\chi_{\tilde D_k}.$ The set $ \{e^{2\pi i x \lambda_n}\}_{n\in\Z}$ is a Riesz basis for $L^2(\tilde D) $ if and only if the set $ \{  w_ne^{2\pi i x \lambda_n}\}_{n\in\Z}$ is a Riesz basis for $L^2(D) $. Moreover, those two bases have the same Riesz constants.  
We can also see that the set $ \{ \overline{ w_n}e^{2\pi i x \lambda_n}\}_{n\in\Z}$ is a Riesz basis for $L^2(D) $ if and only if the set 
 	$ \{  e^{2\pi i x \lambda_n}\}_{n\in\Z}$ is a Riesz basis for $L^2(\tilde D) $. 
 	Moreover, those two bases have the same Riesz constants.   

\end{Rem}

If we replace $D$ by $J,$ when 
\begin{equation*} J=\bigcup J_j, \quad\mbox{with }\ 
 J_j=[a_{j}+b_j,a_{j+1}+b_j).\end{equation*} 
 with $J_j=[a_{j}+b_j,a_{j+1}+b_j),$ as in $\eqref{defJ},$ then we obtain a special case of Lemma $\ref{l-Transform}.$ 
 
\begin{Lemma}\label{lmBasis}
For $m$ finite or infinite the sequence $\{g_n\}_{n\in\Z},$ where  
$$g_n=\sum_{k=0}^{m} e^{2\pi i x \lambda_n}e^{2\pi i b_k \lambda_n}\chi_{J_k},$$
is a Riesz basis for $L^2(J)$ if and only if $\B=\{e^{2\pi i x \lambda_n}\}_{n\in\Z}$ is a Riesz basis for $L^2(I).$ Moreover, two bases $\B$ and $\{g_n\}$ have  the same Riesz constants.  Conversely, the set $\{\overset{\sim}{g}_n\}_{n\in\Z},$  where 
$$\overset{\sim}{g}_n=\sum_{k=0}^{m} e^{2\pi i x \lambda_n}e^{-2\pi i b_k \lambda_n}\chi_{I_k},$$
is a Riesz basis for $L^2(I)$ if and only if $\B$ is a Riesz basis for $L^2(J).$ Moreover, two bases $\B$ and $\{\tilde g_n\}$ have  the same Riesz constants.
\end{Lemma}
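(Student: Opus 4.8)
The plan is to read Lemma~\ref{lmBasis} as a special case of Lemma~\ref{l-Transform} together with the Remark that follows it. Indeed $J=\bigcup_k J_k$ is obtained from $I=\bigcup_k I_k$ by translating each piece $I_k$ to the right by $b_k$, and symmetrically $I$ is obtained from $J$ by translating each $J_k$ to the left by $b_k$. So the first thing I would do is check that the hypotheses of Lemma~\ref{l-Transform} hold in both directions: $|I|=1<\infty$, every $|I_k|=a_{k+1}-a_k>0$, the $I_k$ are pairwise disjoint, and the translates $J_k=I_k+b_k$ are still pairwise disjoint because $0=b_0<b_1<\cdots$ forces $\sup J_k=a_{k+1}+b_k\le a_{k+1}+b_{k+1}=\inf J_{k+1}$; also $|J|=\sum_k|J_k|=\sum_k|I_k|=1<\infty$, and the same checks work with $I$ and $J$ interchanged and the translations $-b_k$. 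None of this uses finiteness of $m$, so the case $m=\infty$ is covered as well.

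With this in place, the first equivalence is exactly the Remark applied with $D=J$, $D_k=J_k$ and the Remark's translation vectors taken to be $\tau_k=-b_k$, so that the reassembled set is $\bigcup_k(J_k-b_k)=\bigcup_k I_k=I$. The multiplier produced by the Remark is then $w_n=\sum_k e^{-2\pi i b_k\lambda_n}\chi_{J_k}$, for which $\overline{w_n}\,e^{2\pi i x\lambda_n}=\sum_k e^{2\pi i b_k\lambda_n}\chi_{J_k}e^{2\pi i x\lambda_n}=g_n$; hence the Remark says precisely that $\{g_n\}_{n\in\Z}$ is a Riesz basis for $L^2(J)$ if and only if $\B=\{e^{2\pi i x\lambda_n}\}_{n\in\Z}$ is a Riesz basis for $L^2(I)$, with the same Riesz constants. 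For the ``conversely'' part I would repeat the argument with the roles of $I$ and $J$ exchanged: $D=I$, $D_k=I_k$, translations $\tau_k=+b_k$, reassembled set $\bigcup_k(I_k+b_k)=J$; now $w_n=\sum_k e^{2\pi i b_k\lambda_n}\chi_{I_k}$ and $\overline{w_n}\,e^{2\pi i x\lambda_n}=\sum_k e^{-2\pi i b_k\lambda_n}\chi_{I_k}e^{2\pi i x\lambda_n}=\tilde g_n$, so the Remark gives that $\{\tilde g_n\}_{n\in\Z}$ is a Riesz basis for $L^2(I)$ if and only if $\B$ is a Riesz basis for $L^2(J)$, again with the same Riesz constants.

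There is no separate step for the equality of the Riesz constants: it is already built into Lemma~\ref{l-Transform}, whose transforming operator $T$ is an invertible isometry, and it is carried through the Remark unchanged. The only thing that really needs care is the bookkeeping --- keeping straight which of $I$ and $J$ is the ``broken'' domain in each half of the statement, and carrying the matching sign through the factors $e^{\pm 2\pi i b_k\lambda_n}$ (equivalently, choosing $w_n$ versus $\overline{w_n}$ and the translation $b_k$ versus $-b_k$). I expect that sign-and-role bookkeeping, rather than any analytic difficulty, to be the main obstacle; once it is pinned down, the statement is immediate from the results already established.
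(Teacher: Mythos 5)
Your overall strategy is the paper's own: Lemma~\ref{lmBasis} is presented there with no argument beyond the observation that it is the special case of Lemma~\ref{l-Transform} obtained by taking $D_k=I_k$ and translations by $b_k$, and your verification of the hypotheses (positive measure of the pieces, disjointness of the $I_k$ and of the translates $J_k$, validity for $m=\infty$) is correct and more careful than what the paper records.

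However, the step you yourself single out as the crux --- the sign bookkeeping --- is where the argument breaks. The isometry $T$ of Lemma~\ref{l-Transform} transports $e^{2\pi i x\lambda_n}$ on $I$ to $\sum_k e^{2\pi i(x-b_k)\lambda_n}\chi_{J_k}=e^{2\pi i x\lambda_n}\sum_k e^{-2\pi i b_k\lambda_n}\chi_{J_k}$ on $J$; this is exactly the multiplier $w_n=\sum_k e^{-2\pi i b_k\lambda_n}\chi_{J_k}$ that you compute, and the conclusion it yields is that $\B$ is a Riesz basis for $L^2(I)$ if and only if $\{w_ne^{2\pi i x\lambda_n}\}$ (minus sign on $J_k$) is one for $L^2(J)$. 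To reach the $e^{+2\pi i b_k\lambda_n}$ of the statement you instead invoke the second assertion of the Remark, replacing $w_n$ by $\overline{w_n}$. That assertion does not follow from Lemma~\ref{l-Transform}: conjugating the phases corresponds to translating each piece by $-b_k$ rather than $+b_k$, and transporting $\overline{w_n}e^{2\pi i x\lambda_n}$ back to $I$ with $T$ gives $e^{2\pi i x\lambda_n}\sum_k e^{4\pi i b_k\lambda_n}\chi_{I_k}$, which is equivalent to $\E(\Lambda)$ being a basis on $\bigcup_k(I_k+2b_k)$ --- a different domain. The same sign problem affects the ``conversely'' half. The minus-sign version is in fact the one the paper uses downstream (in the proof of Theorem~\ref{l-segments}, $g_n=\sum_k e^{2\pi i(x-b_k)(n+\delta_n^*)}\chi_{J_k}$), so the displayed signs in the Lemma are best read as typos; but as written, your derivation rests on the unproved and incorrect $\overline{w_n}$ half of the Remark, which is a genuine gap. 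The fix is to bypass the Remark and apply the isometry of Lemma~\ref{l-Transform} directly, which gives the equivalence with $e^{-2\pi i b_k\lambda_n}$ on $J_k$ (and $e^{+2\pi i b_k\lambda_n}$ on $I_k$ in the converse direction), the equality of Riesz constants coming from $T$ being a surjective isometry, as you say.
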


A version of Lemma \ref{lmBasis} is also in \cite{DK}.

\section{Proofs of  the main results}
In this section, we will prove our main results. But first, we remind the reader that $\B^* =\{e^{2\pi i x(n+\delta_n^*)}\}_{n\in\Z},$ where 
$$\delta_n^*= \begin{cases}
	\frac{\lfloor \beta n \rfloor+1}{\beta}-n  & \text{if } \{\beta n\} \geq \frac12 \\
	\frac{\lfloor \beta n \rfloor}{\beta}-n  & \text{if } \{\beta n\} < \frac12\\
\end{cases},
$$
for some $\beta>0,$ see $\eqref{def-*}$ and $\eqref{def-*d}.$
\subsection{A useful Lemma} 
\begin{Lemma}\label{lmDelta*}
Let $\beta \ge 1.$ Then $\B^*$ is an exponential basis for $L^2([0,1]).$  
\end{Lemma}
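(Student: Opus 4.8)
**Proof plan for Lemma \ref{lmDelta*}.**

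The plan is to show that $\B^*$ is a small perturbation of the standard Fourier basis $\E=\{e^{2\pi i nx}\}_{n\in\Z}$ and invoke Kadec's $\frac14$-theorem (Theorem \ref{T-Kodec}). For this I must estimate $|\delta_n^*|$ uniformly in $n$. From the closed form
$$
\delta_n^*=\frac{\xi_n}{\beta}\,\dist(\Z,\beta n)=\frac{\xi_n}{\beta}\,\min\{\{\beta n\},\,1-\{\beta n\}\},
$$
we get immediately $|\delta_n^*|\le \frac{1}{\beta}\cdot\frac12=\frac{1}{2\beta}$. When $\beta>2$ this is already $<\frac14$ and Kadec applies directly. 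So the first step is to isolate the easy range $\beta>2$, and the second step is to handle $1\le\beta\le 2$, where the naive bound $\frac{1}{2\beta}$ only gives $\le\frac12$, which is not good enough.

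For the range $1\le\beta\le 2$ the key observation is that the definition of $\delta_n^*$ rounds $\beta n$ to the \emph{nearest} integer $k_n=\lfloor\beta n\rceil$ and sets $n+\delta_n^* = k_n/\beta$; hence $\Lambda=\{n+\delta_n^*\}=\frac1\beta\{k_n : n\in\Z\}$. The map $n\mapsto k_n=\lfloor\beta n+\tfrac12\rfloor$ is strictly increasing (since $\beta\ge 1$ forces $\beta(n+1)+\tfrac12 - (\beta n+\tfrac12)=\beta\ge 1$), so $\{k_n\}$ is a strictly increasing sequence of integers, i.e. $n\mapsto k_n$ is a bijection onto its image $S\subset\Z$. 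Thus $\B^*=\{e^{2\pi i x(k/\beta)}\}_{k\in S}$, a subfamily of $\frac1\beta\Z$. Now apply Lemma \ref{L-dil-basis} with $\rho=\beta$: $\B^*$ is a Riesz basis for $L^2(0,1)$ iff $\{e^{2\pi i x k}\}_{k\in S}$ is a Riesz basis for $L^2(0,\beta)$. Since $1\le\beta\le 2$ and $S$ omits exactly the integers $j$ not of the form $k_n$ — and one checks $S$ is obtained from $\Z$ by, for each "skipped" value, removing it — the density of $S$ is $\beta$ per unit, matching $|L^2(0,\beta)|=\beta$. One then shows $\{e^{2\pi i k x}\}_{k\in S}$ is a perturbation of $\{e^{2\pi i(\beta n)x}\}_{n\in\Z}$ (which, after rescaling, is $\E$ on $(0,1)$) with perturbation size $\sup_n|k_n-\beta n|=\sup_n\dist(\Z,\beta n)\le\frac12$, i.e. size $\le\frac12$ relative to spacing $\beta\ge 1$; rescaling to the unit interval this is a perturbation of $\E$ of size $\le\frac{1}{2\beta}\le\frac12$. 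This is \emph{still} borderline, so the cleaner route is:

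\textbf{Alternative (preferred) route.} Observe that $\B^*$ is, up to the dilation of Lemma \ref{L-dil-basis}, exactly the family indexed by $S\subset\Z$ sitting inside the orthonormal basis $\{e^{2\pi i kx}\}_{k\in\Z}$ of $L^2(0,1)$. The set $\frac1\beta S$ has gaps between consecutive points equal to either $\frac1\beta$ or $\frac2\beta$, and one verifies that $\B^*$ restricted to $L^2(0,1)$ can be matched bijectively with $\Z$ so that $|n+\delta_n^*-n| = |\delta_n^*|\le\frac{1}{2\beta}$. When $\beta>2$ we are done by Kadec. When $1\le\beta\le2$, instead compare with a shifted lattice: since $\delta_n^*$ always moves $n$ toward the nearest point of $\frac1\beta\Z$ and $|\frac1\beta\Z|$ has spacing $\le 1$, the worst case $\beta\to 1^+$ gives $|\delta_n^*|\le\frac12$ only at $n$ with $\{\beta n\}$ near $\frac12$; but in that regime $\beta=1$ gives $\delta_n^*=0$ identically, and for $\beta$ slightly above $1$ the bound $\frac{1}{2\beta}$ is the obstruction. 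The honest fix: for $1\le\beta\le 2$ use that $\B^*$ is a \emph{subset of an orthonormal basis} after dilation — namely $\{e^{2\pi i k x}\}_{k\in S}$ with $S=\lfloor\beta\Z+\tfrac12\rfloor$ — and prove directly that completeness and the Riesz bounds hold on $L^2(0,\beta)$ by exhibiting the biorthogonal system or by a direct Paley–Wiener comparison with $\{e^{2\pi i \beta n x}\}$, whose perturbation constant $\sup_n |k_n - \beta n|/(\text{density}^{-1})$ must be shown $<\frac14$ after correctly accounting for the non-unit density. The main obstacle is precisely this bookkeeping in the range $1\le\beta\le2$: the crude estimate $|\delta_n^*|\le\frac1{2\beta}$ does not beat $\frac14$, so one must exploit the arithmetic structure of $\{k_n=\lfloor\beta n\rceil\}$ — its monotonicity and the fact that it omits a density-$(1-\tfrac1\beta)$ subset of $\Z$ — rather than treating $\B^*$ as a generic perturbation of $\E$.
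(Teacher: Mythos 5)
Your treatment of the easy range $\beta>2$ is exactly the paper's: the bound $|\delta_n^*|\le\frac{1}{2\beta}<\frac14$ plus Kadec's theorem. The problem is the range $1\le\beta\le 2$, where your proposal correctly diagnoses the obstruction (the pointwise bound $\frac{1}{2\beta}$ does not beat $\frac14$) but never overcomes it: the final paragraph ends with a list of things one ``must show'' --- completeness, Riesz bounds, a biorthogonal system, a Paley--Wiener comparison ``after correctly accounting for the non-unit density'' --- none of which is carried out. Reducing $\B^*$ by dilation to the family $\{e^{2\pi i k x}\}_{k\in S}$ with $S=\{\lfloor\beta n\rceil\}_{n\in\Z}$ a proper subset of $\Z$, viewed on a subinterval of $[0,1]$, is not a simplification: deciding when a subset of the integer exponentials is a Riesz basis on a subset of the unit interval is itself a hard problem (it is essentially the Kozma--Nitzan / Pfander--Revay--Walnut circle of results invoked elsewhere in this paper). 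There are also two bookkeeping errors in that reduction: the substitution $y=x/\beta$ sends the problem to $L^2(0,1/\beta)$, not $L^2(0,\beta)$, and the density of $S$ in $\Z$ is $1/\beta\le 1$, not $\beta$.

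The missing idea, which is how the paper closes the case $1\le\beta\le 2$, is to abandon the pointwise (sup-norm) perturbation estimate entirely and use Avdonin's ``$\frac14$ in the mean'' theorem (Theorem \ref{T-Avdonin}): one only needs the block averages $\frac1N\bigl|\sum_{n=mN+1}^{(m+1)N}\delta_n^*\bigr|$ to be $<\frac14$, together with separation of $\{n+\delta_n^*\}$. The point is that $\delta_n^*=\frac{\xi_n}{\beta}\dist(\Z,\beta n)$ is signed and averages to zero: for rational $\beta=p/q$ the sequence is $q$-periodic with the antisymmetry $\delta_n^*+\delta_{q-n}^*=0$, so the sum over a period is $0$ or at most $\frac{1}{2\beta}$; for irrational $\beta$ one writes $\delta_n^*=g(n\beta)/\beta$ with $g$ $1$-periodic, Riemann integrable and $\int_0^1 g=0$, and Weyl equidistribution (Lemma \ref{L-Weyl}) gives that the block averages tend to $0$. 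Without this cancellation mechanism (or an equivalent substitute), your plan does not yield the lemma for $1\le\beta\le 2$.
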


\begin{proof}
First, we let $\beta>2.$ Then  
\begin{equation*}\label{eqDeltaStar}
    \sup_{\beta\in\Z}|\delta_n^*|= \sup_{\beta\in\Z}\left|\frac{\dist{(\Z,\beta n)}}{\beta}\right|\leq\frac{1}{2\beta}< \frac{1}{4}.
\end{equation*}
So, by Theorem $\ref{T-Kodec}$ $\left\{e^{2 \pi i x (n + \delta_n^*)}\right\}_{n\in\Z}$ is an exponential basis for $L^2([0,1]).$

Next, let $1\leq \beta \leq 2$ and $\beta\in\Q.$ First, trivial case with $\beta=1$ or $\beta=2.$ In this case $\delta^*_n=0$ for all $n.$ So, $\B^*=\left\{e^{2 \pi i x n}\right\}_{n\in\Z} $,    the standard basis for $L^2([0,1]).$ Now, let $1< \beta < 2$ and $\beta\in\Q,$ so there are two integers $p$ and $q$ such that $\beta=\frac{p}{q}.$  We are going to use Theorem $\ref{T-Avdonin},$ so we need to check if $\{\lambda_n\}_{n\in\Z}=\{n+\delta^*_n\}_{n\in\Z}$ is  separated and  if there exist a positive integer $N$ and a positive real number $\varepsilon<\frac14$ such that
\begin{equation*}
    \left| \sum_{n=mN+1}^{(m+1)N} \delta_n^*\right|\leq \varepsilon N
\end{equation*}
for all integers $m,$ see $\eqref{eqAvdonin}.$ For all $n\in\Z$ we compare $\lambda_n$ and $\lambda_{n+1}$
\begin{equation*}
    \begin{split}
    \lambda_{n}&=n+\begin{cases}
  \frac{\lfloor \beta n \rfloor+1}{\beta}-n  & \text{if } \{ \beta n \} \geq \frac12 \\
  \frac{\lfloor \beta n \rfloor}{\beta}-n  & \text{if }  \{\beta n\}  < \frac12\\
\end{cases}\leq n+\frac{1}{2b},\\
    \lambda_{n+1}&=n+1+\begin{cases}
  \frac{\lfloor \beta (n+1) \rfloor+1}{\beta}-n-1  & \text{if } \{ \beta (n+1) \} \geq \frac12 \\
  \frac{\lfloor \beta (n+1) \rfloor}{\beta}-n-1  & \text{if }  \{\beta(n+1)\}  < \frac12\\
\end{cases}\geq n+1-\frac{1}{2\beta}>\lambda_n.\\
    \end{split}
\end{equation*}
So, the sequence  $\{\lambda_n\}_{n\in\Z}$ is increasing. Moreover,
\begin{equation*}
    \begin{split}
\sup_{n\in\Z}|\lambda_{n+1}-\lambda_n|&\geq n+1-\frac{1}{2\beta} - \left(n+\frac{1}{2\beta}\right)= 1- \frac1\beta>0.
    \end{split}
\end{equation*}
Therefore, $\{\lambda_n\}_{n\in\Z}$ is separated. 

Next, we observe that for all $n\in\Z$
\begin{equation*}
\begin{split}
    \delta^*_{n+q} &=
\begin{cases}
  \frac{\lfloor \frac{p}{q} (n+q) \rfloor+1}{\frac{p}{q}}- n -q = \frac{\lfloor \frac{p}{q} n \rfloor + p+1}{\frac{p}{q}}-n-q  & \text{if } \left\{ \frac{p}{q} (n-q)  \right\} \geq \frac12 \\
  \frac{\lfloor \frac{p}{q} (n+q) \rfloor}{\frac{p}{q}}- n -q = \frac{\lfloor \frac{p}{q} n \rfloor + p}{\frac{p}{q}}-n-q  & \text{if }   \left\{ \frac{p}{q} (n-q)  \right\} < \frac12\\
\end{cases}\\
&= \begin{cases}
\frac{\lfloor \frac{p}{q} n \rfloor+1}{\frac{p}{q}}- n  & \text{if }   \left\{ \frac{p}{q} n  \right\} \geq \frac12\\
    \frac{\lfloor \frac{p}{q} n \rfloor}{\frac{p}{q}}- n  & \text{if }  \left\{ \frac{p}{q} n  \right\} < \frac12\\
\end{cases}.
\end{split}
\end{equation*}
So, $\delta^*_n=\delta^*_{n+q}$ and we also observe that  $\delta^*_q=0.$ Thus, in order to apply $\eqref{eqAvdonin}$ it is enough to consider $\left|\sum_{n=1}^{q-1} \delta^*_n \right|.$ Now, for all $n=1,...,\lfloor\frac{q}{2}\rfloor$ 
\begin{equation*}
\begin{split}
    \delta^*_{q-n} &=
\begin{cases}
   \frac{\lfloor \frac{p}{q} (q-n) \rfloor+1}{\frac{p}{q}}- q +n = \frac{\lfloor -\frac{p}{q} n \rfloor + p + 1}{\frac{p}{q}}-q+n  & \text{if } \left\{ \frac{p}{q} (q-n)  \right\} \geq \frac12 \\
\frac{\lfloor \frac{p}{q} (q-n) \rfloor}{\frac{p}{q}}- q +n = \frac{\lfloor -\frac{p}{q} n \rfloor + p}{\frac{p}{q}}-q+n & \text{if } \left\{ \frac{p}{q} (q-n)  \right\} < \frac12\\
\end{cases}\\
&= \begin{cases}
-\frac{\lfloor \frac{p}{q} n \rfloor+1}{\frac{p}{q}}+ n & \text{if }   \left\{ \frac{p}{q} n  \right\} \geq \frac12\\
    -\frac{\lfloor \frac{p}{q} n \rfloor}{\frac{p}{q}}+ n  & \text{if }  \left\{ \frac{p}{q} n  \right\} < \frac12\\
\end{cases}.
\end{split}
\end{equation*}
It means that $\delta^*_n + \delta^*_{q-n}=0.$ Moreover, if $q$ is even, then $\delta^*_{\frac{q}{2}}+\delta^*_{\frac{q}{2}}=0,$ then $\delta^*_{\frac{q}{2}}=0.$ Thus, 
$$
\left|\sum_{n=1}^{q-1} \delta^*_n \right| = 0
$$
If $q$ is odd, then 
\begin{equation*}
    \begin{split}
     \left|\sum_{n=1}^{q} \delta^*_n \right| &= |\delta^*_{\frac{q+1}{2}}| =\left|\begin{cases}
  \frac{\lfloor \frac{p}{2} \rfloor+1}{\frac{p}{q}}-\frac{q}{2}  & \text{if } \left\{ \frac{p}{2} \right\} \geq \frac12 \\
  \frac{\lfloor \frac{p}{2} \rfloor}{\frac{p}{q}}-\frac{q}{2}   & \text{if }  \left\{ \frac{p}{2} \right\} < \frac12\\
\end{cases}  \right|= \begin{cases}
  0  & \text{if } p \text{ is even} \\
  \frac{1}{2b}   & \text{if } p \text{ is odd}\\
\end{cases}< \frac{1}{4}<\frac{q}{4},
    \end{split}
\end{equation*}
because $q>1.$ Therefore, using Theorem $\ref{T-Avdonin}$ we conclude that $\left\{e^{2 \pi x (n + \delta_n^*)}\right\}_{n\in\Z}$ is an exponential basis for $L^2([0,1]).$

Next, we consider the case when $1<\beta<2$ is irrational. We can rewrite our sequence in the form 
$$
\delta_n^* = \frac{g(n \beta)}{\beta},
$$
where 
$$
g(x)=\begin{cases}
  1 - \{ x \}  & \text{if } \{ x \} \geq \frac12 \\
  - \{ x \} & \text{if }  \{ x \} < \frac12\\
\end{cases}.
$$
$g$ is Riemann integrable in $[0,1],$ and periodic of period $1.$ Moreover, 
$$
\int_0^1 g(x)dx = 0.
$$
So, by Lemma $\ref{L-Weyl}$
$$
\lim_{N\rightarrow \infty}\frac{1}{N}\sum_{n=1}^{N} g(n\beta)=0
$$
Moreover, by simple translation we can get that for all $m\in \Z$ 
$$
\lim_{N\rightarrow \infty}\frac{1}{N}\sum_{n=mN+1}^{(m+1)N} \frac{g(n\beta)}{\beta}=0.
$$
It means that for any $\varepsilon<\frac{1}{4},$ there is a $N_0$ such that for all $m\in\Z$ and for all $N>N_0$
$$
\left|\frac{1}{N}\sum_{n=mN+1}^{(m+1)N} \frac{g(n\beta)}{\beta}\right| =\left|\frac{1}{N}\sum_{n=mN+1}^{(m+1)N} \delta_n^*\right| <\varepsilon.
$$
Therefore, using Theorem $\ref{T-Avdonin}$ we conclude that $\left\{e^{2 \pi x (n + \delta_n^*)}\right\}_{n\in\Z}$ is an exponential basis for $L^2([0,1]).$
\end{proof}
\begin{Rem}
 Since we proved that $\delta_n^*$ is periodic  when $1\leq \beta \leq 2$ and $\beta\in\Q,$ we could also have used Corollary 3.1 from \cite{D} to conclude that $\B^*$ is an exponential basis for $L^2([0,1]).$
\end{Rem}

\begin{Rem}\label{r2}
If $\beta>2$ then Theorem \ref{T-Kodec}  shows  that the frame constant of the basis are $A=\cos(\pi L)-\sin( \pi L)$ and $B= 2-\cos(\pi L)+\sin( \pi L)$, where $L=\sup_{n\in\Z}|\delta_n|.$
\end{Rem}

\subsection{Proof of Theorem $\ref{l-segments}$}
\begin{proof}
Let $m$ be infinite or finite. By Lemma $\ref{lmDelta*}$  $\B^*$ is an exponential basis for $L^2(I)$ with the Riesz constants $A$ and $B.$ We can obtain a Riesz basis $\{g_n\}_{n\in\Z}$ for $L^2(J)$ using Lemma $\ref{lmBasis},$ where $g_n=\sum_{k=0}^{m-1} e^{2\pi i (x-b_k) (n+\delta_n^*)}\chi_{J_k}.$ Next, we use Paley-Wiener Theorem to show that $\B^*$ is a basis for $L^2(J)$. So, we need to show that there is $0\leq\alpha<1$ such that for all sequence $\{a_n\}$ with the property $\sum |a_n|^2= 1$  
\begin{equation}\label{eqPWJ-l}
    ||\sum a_n (g_n-e^{2\pi ix (n+\delta_n^*)})  ||^2_{L^2(J)}\leq \alpha ||\sum a_n g_n  ||^2_{L^2(J)}.
\end{equation}
Using a simple substitution and the Riesz constants of the basis $\B^*$ we can estimate the right-hand side of the inequality $\ref{eqPWJ-l}$ 
\begin{equation*}
    0<A\leq ||\sum a_n e^{2\pi i x (n+\delta_n^*)}  ||^2_{L^2(I)}=||\sum a_n g_n  ||^2_{L^2(J)}.
\end{equation*}
For the left-hand side, using the definition of $g_n$ and Minkowski's inequality 
\begin{equation*}
\begin{split}
    ||\sum a_n (g_n-e^{2\pi ix (n+\delta_n^*)})  ||^2_{L^2(J)} &\leq \sum_{k=1}^{m-1} ||\sum a_n (e^{2\pi i(x-b_k) (n+\delta_n^*)}-e^{2\pi ix (n+\delta_n^*)})  ||^2_{L^2(J_k)}\\
    &= \sum_{k=1}^{m-1} ||\sum  a_n (e^{-2\pi ib_k (n+\delta_n^*)}-1) e^{2\pi ix (n+\delta_n^*)}  ||^2_{L^2(J_k)}.
\end{split}
\end{equation*}
Next, we recall that 	\begin{equation*}
		\delta_n^*=\begin{cases}
	\frac{\lfloor \beta n \rfloor+1}{\beta}-n  & \text{if } \{\beta n\} \geq \frac12 \\
	\frac{\lfloor \beta n \rfloor}{\beta}-n  & \text{if } \{\beta n\} < \frac12\\
\end{cases}.
	\end{equation*} 
 It means that for each $n\in \Z$ we can find $M_{n}\in \Z$ such that $\delta_n^*=\frac{M_{n}}{\beta}-n.$ Thus, for all $k=1,...,m-1.$
\begin{equation*}
\begin{split}
 e^{-2\pi  ib_k (n+\delta_n^*)}-1&= e^{-2\pi i \frac{M_{n}b_k}{\beta}}-1=0,
\end{split}
\end{equation*}
because $\frac{b_k}{\beta}\in\Z$ for $k=1,...,m-1.$ So, 
\begin{equation*}
\begin{split}
    ||\sum a_n (g_n-e^{2\pi ix (n+\delta_n^*)})  ||^2_{L^2(J)} =0.
\end{split}
\end{equation*}
and the inequality $\eqref{eqPWJ-l}$ holds. Therefore, $\B^*$ is an exponential basis for $L^2(J)$ with Riesz constants $A$ and $B.$

\end{proof}

 \subsection{Proof of Theorem $\ref{T-ComplementB}$}
In \cite{PRW} the reader can find the following result.
\begin{Thm}\label{T-Complement}
Let $\Delta>0$ and $S\subset [0,\Delta].$ Suppose that for some $\Lambda\subset \frac{1}{\Delta}\Z,$ $\E(\Lambda)$ is a Riesz basis for $L^2(S).$ Then $\E(\frac{1}{\Delta}\Z\setminus\Lambda)$ is a Riesz basis for $L^2([0,\Delta]\setminus S).$ 
\end{Thm}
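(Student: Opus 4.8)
The plan is to strip away the concrete exponential structure and recognize the statement as a general complementation principle for an orthonormal basis split across two orthogonal decompositions of a Hilbert space. Set $H=L^2([0,\Delta])$ and observe that $\{\tfrac{1}{\sqrt\Delta}e^{2\pi i kx/\Delta}\}_{k\in\Z}$ is an orthonormal basis for $H$ (write $e_k$ for its $k$-th element). Write $\Lambda=\{k/\Delta:k\in K\}$ for some $K\subset\Z$, so that $\tfrac{1}{\Delta}\Z\setminus\Lambda$ corresponds to the complementary index set $K'=\Z\setminus K$. Identify $M=L^2(S)$ and $M^\perp=L^2([0,\Delta]\setminus S)$ with the orthogonal subspaces of $H$ of functions vanishing off $S$ and off its complement, with orthogonal projections $P_M$ (multiplication by $\chi_S$) and $P_{M^\perp}$ (multiplication by $\chi_{[0,\Delta]\setminus S}$). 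Up to the harmless normalization $1/\sqrt\Delta$, the restriction of $e^{2\pi i kx/\Delta}$ to $S$ is exactly $P_M e_k$, and likewise on the complement; thus both hypothesis and conclusion become statements about projecting pieces of an orthonormal basis.

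\textbf{Rephrasing via isomorphisms.} First I would record the standard fact that a system $\{Te_k\}_k$ obtained by applying a bounded operator $T$ to an orthonormal basis $\{e_k\}$ of a space $V$ is a Riesz basis for $M$ if and only if $T:V\to M$ is a topological isomorphism (bounded with bounded inverse). Applying this with $V=\ov{\mathrm{span}}\{e_k:k\in K\}$ and $T=P_M|_V$, the hypothesis that $\E(\Lambda)$ is a Riesz basis for $L^2(S)$ is equivalent to $P_M|_V:V\to M$ being an isomorphism. Symmetrically, with $W=V^\perp=\ov{\mathrm{span}}\{e_k:k\in K'\}$, the desired conclusion is equivalent to $P_{M^\perp}|_W:W\to M^\perp$ being an isomorphism. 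So the theorem reduces to a purely geometric assertion: given the two orthogonal decompositions $H=M\oplus M^\perp=V\oplus W$, the map $P_M|_V$ is an isomorphism onto $M$ if and only if $P_{M^\perp}|_W$ is an isomorphism onto $M^\perp$.

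\textbf{From isomorphisms to direct sums, and duality.} Next I would convert each isomorphism statement into a topological direct-sum decomposition. A short argument shows $P_M|_V:V\to M$ is an isomorphism precisely when $H=V\oplus M^\perp$ is a topological direct sum: injectivity is $V\cap M^\perp=\{0\}$; surjectivity follows by decomposing any $m\in M$ as $v+u$ with $v\in V,\,u\in M^\perp$ and noting $m=P_M m=P_M v$; and the lower bound corresponds to the sum being closed. Likewise $P_{M^\perp}|_W$ is an isomorphism iff $H=W\oplus M$. The final link is the duality of oblique direct sums: letting $E$ be the bounded idempotent projection onto $V$ along $M^\perp$, its adjoint $E^*$ is a bounded idempotent with $\mathrm{ran}(E^*)=(\ker E)^\perp=(M^\perp)^\perp=M$ and $\ker(E^*)=(\mathrm{ran}\,E)^\perp=V^\perp=W$, so $E^*$ realizes $H=M\oplus W$ as a topological direct sum. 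Hence $H=V\oplus M^\perp$ if and only if $H=W\oplus M$, which closes the chain of equivalences and proves the theorem.

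\textbf{Main obstacle.} I expect the main obstacle to be the careful treatment of the ``bounded below'' / closed-range condition, i.e.\ the positivity of the angle between the two subspaces, so that the algebraic direct sum is genuinely \emph{topological} and the inverse operator is bounded. The algebraic bijectivity is easy; what requires care is tracking uniform upper and lower bounds through the chain Riesz basis $\Leftrightarrow$ isomorphism $\Leftrightarrow$ closed topological direct sum, and confirming that the adjoint $E^*$ inherits boundedness, idempotency, and the asserted range and kernel. Once $E$ is known to be bounded — which is precisely the content of the hypothesis through the lower Riesz bound — passing to $E^*$ is routine and the remaining steps are formal.
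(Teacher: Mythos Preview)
Your proof is correct, but there is nothing in the paper to compare it against: the paper does not prove Theorem~\ref{T-Complement} at all. It is quoted as an external result from \cite{PRW} (``In \cite{PRW} the reader can find the following result'') and then used as a black box in the proof of Theorem~\ref{T-ComplementB}.

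Your argument supplies a self-contained proof where the paper offers none. The abstraction to two orthogonal decompositions $H=M\oplus M^\perp=V\oplus W$ is exactly the right move, and the chain ``$\{P_M e_k\}_{k\in K}$ is a Riesz basis for $M$ $\Leftrightarrow$ $P_M|_V:V\to M$ is an isomorphism $\Leftrightarrow$ $H=V\dotplus M^\perp$ topologically $\Leftrightarrow$ (via $E\mapsto E^*$) $H=M\dotplus W$ topologically $\Leftrightarrow$ $P_{M^\perp}|_W:W\to M^\perp$ is an isomorphism'' is sound. The only points worth tightening in a final write-up are the ones you already flagged: that a bounded idempotent has closed range (so $\mathrm{ran}(E^*)=(\ker E)^\perp$ genuinely, not merely its closure), and that the lower Riesz bound translates precisely into the positive-angle condition $\inf_{v\in V,\,\|v\|=1}\|P_M v\|>0$, which is what makes the algebraic direct sum topological. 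With those standard facts recorded, the argument is complete.
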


\begin{proof} [Proof of the Theorem \ref{T-ComplementB}]
Let $\beta$ and $b_k$ be real numbers as in Theorem \ref{T-ComplementB}. First, we introduce the interval $[0,\Delta],$ where $\Delta=\lceil\frac{1+b_{m-1}}{\beta} \rceil \beta$ and $\lceil x \rceil$  is a ceiling function. The set $J$ defined as $\eqref{defJ}$ will be a subset of $[0,\Delta].$ Let  $\Lambda=\{n+\delta^*_n\}_{n\in\Z}=\{\lambda_n\}_{n\in\Z},$ where 
\begin{equation*}
    \begin{split}
        \lambda_n&=\begin{cases}
	\frac{\lfloor \beta n \rfloor+1}{\beta}  & \text{if } \{\beta n\} \geq \frac12 \\
	\frac{\lfloor \beta n \rfloor}{\beta}   & \text{if } \{\beta n\} < \frac12\\
\end{cases}.
    \end{split}
\end{equation*}
In view of the definition of $\beta$ we have that $\Lambda\subset\frac{1}{\Delta}\Z.$ By Theorem \ref{l-segments} $\E(\Lambda)$ is a Riesz basis for $L^2(J).$ So, by Theorem $\ref{T-Complement},$ $\E(\frac{1}{\Delta}\Z\setminus\Lambda)$ is a Riesz basis for $L^2([0,\Delta]\setminus J).$
 \end{proof}

\section{Extensions and generalizations}
\subsection{A stability Theorem}
\begin{Thm}\label{T-stability} (Stability Theorem for $m$-segments). \label{l-segmentsStability}
Let $m\in\N\setminus\{1\}$ and $b_j \in \N$ for all $j=1,...,m-1$ and $L:=\sup_n |\delta_n|<\frac{1}{4}.$ Then the system $\{e^{2\pi i(n+\delta_n)x}\}$ is a Riesz basis for $L^2(J)$ if for all $j=1,...,m-1$ and $n\in \Z$
	\begin{equation}\label{eqStability-l} 
    		\max_{j=1,..,m-1} \{B_{\gamma_j} \sup_n |\sin(\pi \dist(\Z, \delta_nb_j)  ) |\}   < \frac{ A}{2\sqrt{m}},
	\end{equation}
 where 
 \begin{equation}\label{eqALBL}
     \begin{split}
        A&=A(L)=\cos(\pi L) - \sin(\pi L);\\ 
        B_{\gamma_j}&=B_{\gamma_j}(L)=2-\cos{(\pi \gamma_j L)}+\sin{(\pi \gamma_j L)}.
     \end{split}
 \end{equation}
 \end{Thm}
 
\begin{proof}
By $\frac{1}{4}-$Kadec Theorem if $L:=\sup_n |\delta_n|< \frac{1}{4},$ then $e^{2\pi i x (n+\delta_n)}$ is the Riesz basis for $L^2(I)$ with constants $A=\cos{(\pi L)} - \sin{(\pi L)}$ and  $B=2-\cos{(\pi L)} + \sin{(\pi L)}.$ Then using Lemma $\ref{lmBasis}$ we can obtain a Riesz basis $\{g_n\}_{n\in \Z}$ for $L^2(J),$ where
$$g_n=\sum_{k=0}^{m} e^{2\pi i x \lambda_n}e^{2\pi i b_k \lambda_n}\chi_{J_k}.$$
Next, we are going to use Theorem $\ref{T-PaleyWiener}$  with the norm $$||\cdot||^2_{L^{2,\infty}(J)}=\sup_{k=0,...,m-1}||\cdot||^2_{L^2(J_k)}.$$ 
The right-hand side of the inequality we can estimate using substitution and the Riesz sequence definition as 
\begin{equation*}
    \begin{split}
        ||\sum a_n g_n||_{L^{2,\infty}(J)}&\geq \frac{1}{\sqrt{m}}||\sum a_n g_n||_{L^2(J)}\\
        &=\frac{1}{\sqrt{m}}||\sum a_n e^{i(n+\delta_n)x}||_{L^2(I)}.
    \end{split}
\end{equation*}
By the elementary inequality $\sum\{|a_1|,...,|a_m|\}\geq \frac{1}{\sqrt(m)}\sqrt{\sum_n|a_n|^2}$ we have
\begin{equation*}
    \begin{split}
\frac{1}{\sqrt{m}}||\sum a_n e^{i(n+\delta_n)x}||_{L^2(I)}
        \geq \frac{A}{\sqrt{m}} \sqrt{\sum |a_n|^2}=\frac{A}{\sqrt{m}}.
    \end{split}
\end{equation*}
For the left-hand side, we have
\begin{equation*}
    \begin{split}
        ||\sum a_n \left(g_n- e^{2 \pi i(n+\delta_n)x}\right)||_{L^{2,\infty}(J)}&\leq \max_{j=1,..,m}||\sum a_n e^{2\pi i(n+\delta_n)x}\left(e^{-2\pi i(n+\delta_n)b_j}- 1\right)||_{L^{2,\infty}(J_j)}\\
        &\leq \max_{j=1,..,m}\{ B_{\gamma_j}\sqrt{\sum \left|a_n \left(e^{-2\pi i(n+\delta_n)b_j}- 1\right)\right|^2}\}\\
        &\leq 2 \max_{j=1,..,m} \{B_j \sup_n{|\sin{(\pi(n+\delta_n)b_j)}|}\}\\
        &= 2 \max_{j=1,..,m} \{B_{\gamma_j} \sup_n{|\sin{(\pi\delta_nb_j)}|}\},
    \end{split}
\end{equation*}
where $B_{\gamma_j}= 2 - \cos{(\pi\gamma_j L)} + \sin{(\pi \gamma_j L)}$ for $j=1,..,m-1.$ So, we need 
$$
 \max_{j=1,..,m-1} \{B_{\gamma_j} \sup_n{|\sin{(\pi\delta_nb_j)}|}\} < \frac{ A}{2\sqrt{m}}
$$
or
$$
 \max_{j=1,..,m-1} \{B_{\gamma_j} \sup_n{|\sin{(\min\{\{\delta_nb_j\},1-\{\delta_nb_j\}\}\pi)}|}\} < \frac{ A}{2\sqrt{m}}.
$$

\end{proof}

\begin{Rem}
    If $m=1$ we only  have one interval, so Kadec's theorem holds.  
\end{Rem}

\begin{Rem}
Observe that $A(L)$ defined as $\eqref{eqALBL}$ is a concave down function of $L$ on the interval $\left[0,\frac14\right)$ and  $B_{\gamma_j}(L)$ defined as $\eqref{eqALBL}$ is a concave up function of $L$ on the interval $\left[0,\frac{1}{4\gamma_j}\right)$ for $j=1,...,m-1.$ Also, we can use the fact that $\sin(\pi \mbox{dist}(\delta_nb_j, \Z)) \leq  \pi \mbox{dist}(\delta_nb_j, \Z). $ So, 
the condition 
$$
\max_{j=1,..,m-1} \{(1+4L\gamma_j)\,\mbox{dist}(\Z, \delta_nb_j)\}    \leq \frac{ (1-4 L)}{2 \sqrt{m}\pi}, 
$$    
for $j=1,...,m-1,$ guarantees that $\eqref{eqStability-l}$ holds.  
\end{Rem}

\subsection{ $d-$dimensions}

 We use the arguments developed in previous sections to find bases on "split cubes". Let $Q=[0,1)^d$ and let $0=a_{0,k}<a_{1,k}<...<a_{m_k-1,k}<...<1$, where $m_k\in\N\cup\{\infty\},$ for all $k=1,...,d.$ Also, we let $I_{j,k}=[a_{j,k}, a_{j+1,k})$ and
$$R_{\vec{j}}=R_{j_1,j_2,...,j_d}=I_{j_1,1}\times I_{j_2,2}\times...\times I_{j_k,k},$$ 
so 
$$Q=  \bigcup_{j_1,j_2,...,j_d} R_{j_1,j_2,...,j_d}.$$  
	
	Given that for all $k=1,...,d,$ we have  $0=b_{0,k}<b_{1,k}<...<b_{m_k-1,k}$, and we let  $\vec{\beta}_{\vec{j}}=(b_{j_1,1},b_{j_2,2},...,b_{j_d,d}).$ Now, we can define a "split cubes" as 
\begin{equation}\label{defQ} 
\tilde Q=\bigcup_{\vec{j}} \left(\tau_{\vec{\beta}_{\vec{j}}}+R_{\vec{j}}\right).
 \end{equation}  
Let $\beta_k>0$ be fixed for $k=1,...,d$;  for every $n_k\in \Z$, we let
	\begin{equation} \label{d*d}
		\delta_{n_k}^*=\begin{cases}
	\frac{\lfloor \beta_k n_k \rfloor+1}{\beta_k}-n_k  & \text{if } \{\beta_k n_k\} \geq \frac12 \\
	\frac{\lfloor \beta_k n_k \rfloor}{\beta_k}-n_k  & \text{if } \{\beta_k n_k\} < \frac12\\
\end{cases}.
	\end{equation}
	We let $\vec{\delta}_{n}^*=(\delta_{n_1}^*,...,\delta_{n_d}^*).$ Let 
\begin{equation}\label{def-*d}
\begin{split}
    \B_k^* &=\{e^{2\pi i x_k(n_k+\delta_{n_k}^*)}\}_{n_k\in\Z },\\ 
    \B^*&= \{e^{2\pi i  x\cdot (n +\vec\delta_{n }^*) }\}_{n \in\Z^d} \quad \mbox{ with $x\in\R^d$.}
\end{split}
	 \end{equation} 
 Lemma 2.1 in \cite{SZ} can be generalized in the following way. 
 
  \begin{Lemma}\label{l-Tensor}
Let each the set  ${\cal U}_j=\{e^{2\pi i   x  \lambda_{j }(n)}\}_{n \in\Z}$ be a  basis on a domain $D_j\subset \R$, with constants $A_j$ and $B_j,$  then the set 
 $\{e^{2\pi i (   \lambda_1(n_1)x_1 +... + \lambda_d(n_d) x_d }\}_{n_1, ..., n_d \in\Z } $ is a basis on $L^2( D_1\times... \times D_d),
 $ with the constants $A=A_1\cdot...\cdot A_d$ and $B=B_1\cdot...\cdot B_d.$
\end{Lemma}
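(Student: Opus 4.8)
The plan is to argue by induction on $d$, the case $d=1$ being the hypothesis. Since
$$L^2(D_1\times\cdots\times D_d)\;\cong\;L^2\big((D_1\times\cdots\times D_{d-1})\times D_d\big),$$
and the $(d-1)$-fold product system $\{e^{2\pi i(\lambda_1(n_1)x_1+\cdots+\lambda_{d-1}(n_{d-1})x_{d-1})}\}$ is, by the inductive hypothesis, a Riesz basis for $L^2(D_1\times\cdots\times D_{d-1})$ with constants $A_1\cdots A_{d-1}$ and $B_1\cdots B_{d-1}$, it suffices to prove the bilinear statement: \emph{if $\mathcal U=\{u_\alpha\}_{\alpha\in\mathcal A}$ is a Riesz basis for $L^2(E)$ with constants $A,B$ and $\mathcal V=\{v_\gamma\}_{\gamma\in\mathcal C}$ is a Riesz basis for $L^2(F)$ with constants $A',B'$, then $\{u_\alpha(x)v_\gamma(y)\}_{(\alpha,\gamma)\in\mathcal A\times\mathcal C}$ is a Riesz basis for $L^2(E\times F)$ with constants $AA'$ and $BB'$.} Nothing is lost by allowing $\mathcal A$ to be an arbitrary countable index set, since the iterated argument below does not use its structure.

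For the two-sided estimate \eqref{e2-RS}, fix a finite array of scalars $\{c_{\alpha,\gamma}\}$ and, for $y\in F$, set $d_\alpha(y)=\sum_\gamma c_{\alpha,\gamma}v_\gamma(y)$, so that $\sum_{\alpha,\gamma}c_{\alpha,\gamma}u_\alpha(x)v_\gamma(y)=\sum_\alpha d_\alpha(y)\,u_\alpha(x)$. Applying \eqref{e2-RS} for $\mathcal U$ to the finite coefficient sequence $\{d_\alpha(y)\}_\alpha$ for each fixed $y$ and integrating over $F$ yields
$$A\int_F\sum_\alpha|d_\alpha(y)|^2\,dy\;\le\;\Big\|\sum_{\alpha,\gamma}c_{\alpha,\gamma}u_\alpha v_\gamma\Big\|_{L^2(E\times F)}^2\;\le\;B\int_F\sum_\alpha|d_\alpha(y)|^2\,dy.$$
Since only finitely many $\alpha$ occur, $\int_F\sum_\alpha|d_\alpha(y)|^2\,dy=\sum_\alpha\|\sum_\gamma c_{\alpha,\gamma}v_\gamma\|_{L^2(F)}^2$, and applying \eqref{e2-RS} for $\mathcal V$ termwise gives $A'\sum_{\alpha,\gamma}|c_{\alpha,\gamma}|^2\le\int_F\sum_\alpha|d_\alpha(y)|^2\,dy\le B'\sum_{\alpha,\gamma}|c_{\alpha,\gamma}|^2$. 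Combining the two displays produces \eqref{e2-RS} for the product system with constants $AA'$ and $BB'$.

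The frame inequality \eqref{e2-frame} is obtained by the dual computation. For $f\in L^2(E\times F)$ put $\hat f(\alpha,y)=\langle f(\cdot,y),u_\alpha\rangle_{L^2(E)}$, which is defined for a.e.\ $y$ and measurable in $y$ by Fubini (since $f(\cdot,y)\in L^2(E)$ for a.e.\ $y$), and note $\langle f,u_\alpha v_\gamma\rangle=\langle\hat f(\alpha,\cdot),v_\gamma\rangle_{L^2(F)}$. Applying \eqref{e2-frame} for $\mathcal U$ slicewise in $y$ and integrating gives $A\|f\|^2\le\int_F\sum_\alpha|\hat f(\alpha,y)|^2\,dy\le B\|f\|^2$; in particular each $\hat f(\alpha,\cdot)$ lies in $L^2(F)$. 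Applying \eqref{e2-frame} for $\mathcal V$ to $y\mapsto\hat f(\alpha,y)$, summing over $\alpha$, and using Tonelli on the nonnegative terms yields $AA'\|f\|^2\le\sum_{\alpha,\gamma}|\langle f,u_\alpha v_\gamma\rangle|^2\le BB'\|f\|^2$. Together with \eqref{e2-RS} this shows the product system is a Riesz basis for $L^2(E\times F)$ with the asserted constants; alternatively, completeness may be read off from the fact that the algebraic tensor product of the dense spans of $\mathcal U$ and $\mathcal V$ is dense in $L^2(E\times F)$.

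I expect the only delicate point to be the measure-theoretic bookkeeping in the frame estimate — that $y\mapsto\hat f(\alpha,y)$ is measurable and lies in $L^2(F)$ for each $\alpha$, that the one-variable inequalities may be invoked on a.e.\ slice, and that Fubini and Tonelli apply when passing between iterated and product integrals — all of which is routine because the arrays in \eqref{e2-RS} are finite and the integrands appearing in \eqref{e2-frame} are nonnegative. As an alternative packaging, one can write $\mathcal U_j=T_j(\mathcal E_j)$ where $\mathcal E_j$ is an orthonormal basis of $L^2(D_j)$ and $T_j$ is bounded and invertible with $\|T_j\|^2\le B_j$ and $\|T_j^{-1}\|^{-2}\ge A_j$; the Hilbert-space tensor product $T_1\otimes\cdots\otimes T_d$ is then bounded and invertible on $L^2(D_1\times\cdots\times D_d)$ with $\|T_1\otimes\cdots\otimes T_d\|=\prod_j\|T_j\|$ and likewise for its inverse, and it carries the orthonormal product basis onto the product exponential system, giving the result with the same constants.
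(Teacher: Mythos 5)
Your proof is correct and follows essentially the same route as the paper: the paper also reduces to the case of two factors, proves the Riesz-sequence inequality by grouping the sum as $\sum_\alpha\bigl(\sum_\gamma c_{\alpha,\gamma}v_\gamma\bigr)u_\alpha$ and applying the one-variable bounds in succession, and proves the frame inequality by the dual slicewise computation. Your added care with the induction, the measurability of the slices, and the alternative tensor-product-of-operators packaging are refinements of, not departures from, the paper's argument.
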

 \begin{proof}
We consider only the case $d=2.$ If $d>2,$ the proof is similar. Let  ${\cal U}_j=\{e^{2\pi i   x  (\lambda_{j }(n)}\}_{n \in\Z}$ be a basis on a domain $D_j\subset \R$, with constants $A_j$ and $B_j,$ $j=1,2.$ Also, to simplify formulas we use the following notations
\begin{equation*}
    \begin{split}
        v_{j,n_j}= e^{2\pi i   x  \lambda_{j }(n_j)} \text{ and } \tilde v_{n_1,n_2}=v_{1,n_1}\cdot v_{2,n_2}.  
    \end{split}
\end{equation*}
For any $f\in L^2(D_1\times D_2),$ we have for  
\begin{equation*}
    \begin{split}
        \sum_{n_1, n_2\in \Z} |\l  f, \tilde v_{n_1,n_2}\r_{L^2(D_1\times D_2)} |^2 &= \sum_{n_2} \sum_{n_1} \left|\int_{D_1} \left(\int_{D_2} f(x_1,x_2)v_{2,n_2} dx_2\right) v_{1,n_1} dx_1\right|^2\\
        &\leq B_1 \int_{D_1} \sum_{n_2} \left| \int_{D_2} f(x_1,x_2)v_{2,n_2} dx_2\right|^2 dx_1\\
        &\leq B_1 B_2 \norm{f}_{L^2(D_1\times D_2)}^2. 
    \end{split}
\end{equation*}
A similar argument shows that 
\begin{equation*}
    \begin{split}
        \sum_{n_1, n_2\in \Z} |\l  f, \tilde v_{n_1,n_2}\r_{L^2(D_1\times D_2)} |^2 
        &\geq A_1 A_2 \norm{f}_{L^2(D_1\times D_2)}^2.
    \end{split}
\end{equation*}
Thus,  $\{e^{2\pi i (\lambda_1(n_1)x_1 + \lambda_2(n_2)x_2 }\}_{n_1, n_2 \in\Z } $ is a frame for $L^2(D_1\times D_2).$ Next, for any finite sequence of complex numbers  $c_{n_1,n_2 }$ we have 
\begin{equation*}
\begin{split}
        \Big\Vert \sum_{n_1}\sum_{n_2}  c_{n_1,n_2 }  \tilde v_{n_1,n_2} \Big\Vert^2 &=\int_{D_2} \int_{D_1} \left|\sum_{n_1} \left(\sum_{n_2} c_{n_1,n_2 } v_{2,n_2}  \right) v_{1,n_1}  \right|^2 dx_1dx_2 \\
        &\leq B_1 \sum_{n_1} \int_{D_2} \left| \sum_{n_2} c_{n_1,n_2 } v_{2,n_2}  \right|^2 dx_2\\
        &\leq B_1 B_2 \sum_{n_1} \sum_{n_2}  \left|  c_{n_1,n_2 }  \right|^2 
\end{split}
\end{equation*}
A similar argument shows that 
\begin{equation*}
\begin{split}
        \Big\Vert \sum_{n_1}\sum_{n_2}  c_{n_1,n_2 }  \tilde v_{n_1,n_2} \Big\Vert^2 &\geq A_1 A_2 \sum_{n_1} \sum_{n_2}  \left|  c_{n_1,n_2 }  \right|^2 
\end{split}
\end{equation*}
Therefore,  $\{e^{2\pi i (\lambda_1(n_1)x_1 + \lambda_2(n_2)x_2 }\}_{n_1, n_2 \in\Z } $ is a basis for $L^2(D_1\times D_2).$ Moreover, $A=A_1\cdot A_2$ and $B=B_1\cdot B_2$ are the Riesz constants.  
 \end{proof}
 
 Now, we can use Lemma \ref{l-Tensor} to generalize some results from section 3 in $d-$dimensions.  
 
 \begin{Lemma}\label{lmDelta*d}
For all $k=1,...,d,$ let $\beta_k\geq 1.$ Then $\B^*$ is an exponential basis for $L^2([0,1]^d),$ where $\B^*$ is defined as in $\eqref{def-*d}.$  
\end{Lemma}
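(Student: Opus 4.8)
The plan is to reduce the $d$-dimensional statement to the one-dimensional Lemma \ref{lmDelta*} via the tensorization Lemma \ref{l-Tensor}. First I would note that, since $\beta_k\ge 1$ for each $k=1,\dots,d$, Lemma \ref{lmDelta*} applies to the sequence $\{\delta_{n_k}^*\}_{n_k\in\Z}$ defined in \eqref{d*d} (this is exactly the sequence from \eqref{d*} with $\beta$ replaced by $\beta_k$), so the one-dimensional system $\B_k^*=\{e^{2\pi i x_k(n_k+\delta_{n_k}^*)}\}_{n_k\in\Z}$ is an exponential basis for $L^2([0,1])$, say with Riesz constants $A_k,B_k>0$.

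Next, I would observe that for $x=(x_1,\dots,x_d)\in\R^d$ and $n=(n_1,\dots,n_d)\in\Z^d$ one has the factorization
\begin{equation*}
e^{2\pi i x\cdot(n+\vec\delta_n^*)}=\prod_{k=1}^d e^{2\pi i x_k(n_k+\delta_{n_k}^*)},
\end{equation*}
so that $\B^*$ is precisely the tensor-product system built from $\B_1^*,\dots,\B_d^*$. Applying Lemma \ref{l-Tensor} with $D_k=[0,1]$ and $\lambda_k(n_k)=n_k+\delta_{n_k}^*$ then shows that $\B^*$ is a basis for $L^2([0,1]\times\cdots\times[0,1])=L^2([0,1]^d)$, with Riesz constants $A=A_1\cdots A_d$ and $B=B_1\cdots B_d$, which is the claim.

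There is essentially no obstacle here: the entire substance of the argument is contained in Lemmas \ref{lmDelta*} and \ref{l-Tensor}, both already established, and the only point to verify is the elementary factorization of the exponential displayed above, together with the bookkeeping that the resulting Riesz constants are positive and finite. The same scheme — a one-dimensional result applied coordinate by coordinate, followed by Lemma \ref{l-Tensor} — is what I would use afterwards to upgrade Theorem \ref{l-segments} to the split-cube set $\tilde Q$ of \eqref{defQ}; but for the present lemma nothing beyond the two cited results is needed.
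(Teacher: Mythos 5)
Your argument is correct and is essentially identical to the paper's own proof: the paper likewise applies Lemma \ref{lmDelta*} coordinate by coordinate and then invokes the tensorization Lemma \ref{l-Tensor} (which the paper's text miscites as Lemma \ref{lmDelta*d} itself, an evident typo). Your version is, if anything, slightly more complete, since you make the factorization of the exponential and the product form of the Riesz constants explicit.
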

\begin{proof}
From Lemma $\ref{lmDelta*}$ we have that $\B^*_k,$ defined as in $\eqref{def-*d},$ is a basis for $L^2([0,1]).$ Therefore, by Lemma $\ref{lmDelta*d}$ $\B^*$ is an exponential basis for $L^2([0,1]^d).$
\end{proof}

 \begin{Thm}\label{l-segmentsD}
Let $\vec{m}=(m_1,...,m_d),$ when $m_k\in\N\cup\{\infty\}$. For all $k=1,...,d,$ let $\beta_k\geq 1.$ If for all $k=1,...,d$  $\frac{b_{j,k}}{\beta_k}\in\Z $ for all $j$, then the set $\B^*$ defined in \eqref{def-*d} is an exponential basis for $L^2(\tilde Q),$ where $\B^*$ is defined as in $\eqref{def-*d}.$ Moreover, $\B^*$ has the same frame constants for $L^2(\tilde Q)$ and $L^2(Q).$
\end{Thm}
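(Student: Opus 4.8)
The plan is to reduce Theorem \ref{l-segmentsD} to the one-dimensional Theorem \ref{l-segments} by exploiting the product structure of the split cube, and then to reassemble the coordinates using the tensor-product Lemma \ref{l-Tensor}.

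\emph{Step 1: the split cube is a Cartesian product of one-dimensional split intervals.} Because the translation vectors $\vec{\beta}_{\vec{j}}=(b_{j_1,1},\dots,b_{j_d,d})$ act on the coordinates separately, each box $\tau_{\vec{\beta}_{\vec{j}}}+R_{\vec{j}}$ equals $\prod_{k=1}^{d}[a_{j_k,k}+b_{j_k,k},\,a_{j_k+1,k}+b_{j_k,k})$. Setting
$$
J^{(k)}=\bigcup_{j}\,[a_{j,k}+b_{j,k},\,a_{j+1,k}+b_{j,k}),\qquad k=1,\dots,d,
$$
I would then observe that $\tilde Q=\prod_{k=1}^{d}J^{(k)}$ and, likewise, $Q=\prod_{k=1}^{d}I$; the disjointness of the translated boxes defining $\tilde Q$ in \eqref{defQ} is exactly the disjointness, for each $k$, of the intervals making up $J^{(k)}$, so each $J^{(k)}$ is a set of the form \eqref{defJ} in the variable $x_k$ (with $m_k\in\N\cup\{\infty\}$ allowed).

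\emph{Step 2: the one-dimensional factors.} Fixing $k$, since $\beta_k\geq 1$ and $\tfrac{b_{j,k}}{\beta_k}\in\Z$ for every $j$, I would invoke Theorem \ref{l-segments} with $\beta=\beta_k$ to conclude that $\B^*_k=\{e^{2\pi i x_k(n_k+\delta^*_{n_k})}\}_{n_k\in\Z}$ is an exponential basis for $L^2(J^{(k)})$, and — as noted right after the statement of that theorem, and visible from its Paley--Wiener proof — with the same frame constants $A_k,B_k$ that $\B^*_k$ has for $L^2(I)$, namely the constants produced by Lemma \ref{lmDelta*}.

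\emph{Step 3: tensorization.} Then I would apply Lemma \ref{l-Tensor} with $D_k=J^{(k)}$ and $\lambda_k(n_k)=n_k+\delta^*_{n_k}$. Since the frequencies of $\B^*$ satisfy $x\cdot(n+\vec{\delta}^*_n)=\sum_{k=1}^{d}x_k(n_k+\delta^*_{n_k})$, the lemma yields that $\B^*$ is an exponential basis for $L^2(\tilde Q)=L^2\bigl(\prod_k J^{(k)}\bigr)$ with constants $A=\prod_{k=1}^{d}A_k$ and $B=\prod_{k=1}^{d}B_k$. Running the identical tensorization over $D_k=I$ — which is precisely the content of Lemma \ref{lmDelta*d} — gives that $\B^*$ is a basis for $L^2(Q)$ with the same constants $\prod_k A_k$, $\prod_k B_k$, because the one-dimensional constants coincide on $I$ and on $J^{(k)}$ by Step 2. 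This establishes both assertions of the theorem. The argument is essentially bookkeeping; the only point needing genuine care is Step 1, i.e.\ verifying that the split cube factors as a product of one-dimensional split intervals and that the divisibility hypotheses $\tfrac{b_{j,k}}{\beta_k}\in\Z$ pass to each factor — after which Theorem \ref{l-segments} and Lemma \ref{l-Tensor} do all the work.
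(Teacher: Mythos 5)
Your proposal is correct and follows essentially the same route as the paper: identify the projections of $\tilde Q$ onto the coordinate axes as one-dimensional split intervals of the form \eqref{defJ}, apply Theorem \ref{l-segments} to each factor, and tensorize via Lemma \ref{l-Tensor}. Your write-up is in fact more careful than the paper's two-line proof (which cites Lemma \ref{lmDelta*d} where it clearly means Lemma \ref{l-Tensor}), particularly in verifying that $\tilde Q$ genuinely factors as $\prod_k J^{(k)}$ and in tracking the frame constants.
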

\begin{proof}
From Theorem $\ref{l-segments}$ we have that $\B^*_k,$ defined as in $\eqref{def-*d},$ is a basis for $L^2(D_k),$ where $D_k$ is a projection of $\tilde Q$ on $k$th coordinate. Therefore, by Lemma $\ref{lmDelta*d}$ $\B^*$ is an exponential basis for $L^2(\tilde Q).$
\end{proof}

\section{Remarks and open problems}
Theorem \ref{l-segments} provides explicit exponential bases for split intervals under conditions on $b_k.$  In Remark \ref{r2}, we have observed that   we   can obtain  the frame constants for the basis  when $b\geq 2.$ The problem of finding explicit exponential bases for general split intervals, and  explicit frame constants for these bases,   is still waiting for a solution.  The  same situation occurs with exponential bases on split cubes  in $\R^d$.

 We have provided explicit  exponential bases on certain  infinite unions of intervals of  total finite measure.   We would like to generalize our results  and prove the existence of exponential bases on  arbitrary infinite unions of intervals or rectangles. 
	
Our	Theorem \ref{T-stability} reduces to Kadec's theorem  when the interval is not split, but  in the other cases, we obtain stability bounds that depend on the  gaps between the intervals.  We believe that this result can be improved, and we hope to do so in another paper.

\end{document}